\documentclass[a4paper,11pt]{article}

\usepackage{amsmath}
\usepackage{amsfonts}
\usepackage{amssymb}
\usepackage{amsthm}
\usepackage{mathtools}
\usepackage{booktabs}
\usepackage{indentfirst}
\usepackage{hyperref}
\usepackage{enumerate}
\usepackage{tikz}

\newtheorem{theorem}{Theorem}[section]
\newtheorem{lemma}[theorem]{Lemma}
\newtheorem{corollary}[theorem]{Corollary}
\newtheorem{question}[theorem]{Question}

\theoremstyle{definition}
\newtheorem{definition}[theorem]{Definition}
\newtheorem{prop}[theorem]{Proposition}
\newtheorem{example}[theorem]{Example}

\numberwithin{equation}{section}

\begin{document}

\title{Asymptotic Counting of Singular Fibers of Genus $g$}
\author{Qiyang Kong\quad Xiao-Lei Liu\quad Jiayao Wang}
\date{}

\maketitle

\begin{abstract}
Motivated by the effective form of Shafarevich's conjecture, we prove that the number of classes of singular fibers of genus $g$ has the exact asymptotic order $g^{\Theta(g)}$.
\end{abstract}

{\bf{Keywords:}} {Singular fibers,  classification, pseudo-periodic maps}\\

{\bf{MSC(2020)}} {Primary 14D06; Secondary 14H10, 57K20.}

\section{Introduction}
\subsection{Shafarevich's conjecture}
Throughout this paper, we work over the field of complex numbers $\mathbb{C}$. Let $X$ be a smooth projective surface and $C$ a smooth projective curve. A fibration of $X$ is a surjective morphism $f: X\to C$ with connected fibers. 
All but finitely many fibers of $f$ are smooth, and a fiber is called singular if it is not smooth. 
Let $F$ be a general fiber of $f$. The genus of $F$, denoted by $g$, is called the genus of the fibration $f$ and is independent of the choice of $F$.  

Fix a smooth complex projective base curve $C$, and a finite subset $\Delta \subset C$. In his 1962 ICM lecture,  Shafarevich (\cite{shafarevich-ICM-1962}) conjectured that for a fixed pair $(C, \Delta)$ and an integer $g \geq 2$, there are only finitely many  isomorphism classes of non-isotrivial minimal families $f:X \to C$ of curves of genus $g$ with $X$ smooth such that $f:X \setminus f^{-1}(\Delta) \to C \setminus \Delta$ is a smooth family. This conjecture was proved by Parshin (\cite{parsin-curves}), case $\Delta= \emptyset$) and  Arakelov (\cite{arakelov}).

A natural further question is whether this finiteness result can be made effective. In this direction, Caporaso (\cite{caporaso2002certain}) observed that the number of such families can be bounded uniformly in terms of  $(g,q,s)$, where $q=g(C)$ and $s=|\Delta|$. In 2004, Heier (\cite{heier2004uniformly}) further obtained an explicit effective bound for this number, bounded asymptotically by $\exp(\exp(O(g(gq+s)^2)))$. 

Our goal is to obtain a sharper effective upper bound for the number of families appearing in Shafarevich's conjecture. As a first step toward this goal,  we are naturally led to the following  counting problem:
\begin{question}
	For a fixed integer $g\geq 2$, how many  types of singular fibers of genus $g$?
\end{question}

\subsection{Classification of singualr fibers}
The classification of singular fibers is a central problem in the study of fibrations of surfaces.
Kodaira (\cite{kodaira1963compact}) classified singular fibers of genus 1, and the genus 2 case was studied by Ogg (\cite{ogg1966pencils}) and completely classified by Namikawa and Ueno (\cite{namikawa1973complete}). For genus 3, numerical and topological classifications were obtained by Uematsu (\cite{uematsu1999numerical}) and by Ashikaga and Ishizaka (\cite{ashikaga2002classification}), respectively.

The classification of singular fibers plays an important role in the compactification of moduli spaces of curves (\cite{DeligneMumford1969,artin1971degenerate,liu1993courbes}), related algorithms (\cite{tate2006algorithm,liu1994modeles}) and arithmetic geometry (\cite{liu1994conducteur}). In addition, the classification of singular fibers has applications in high energy physics (\cite{xie2023pseudoperiodicmapclassificationtheories,lawrie2018theories}).

Since singular fibers of low genus have been completely classified, it is natural to extend this classification to an arbitrary genus.

 Recently, Dokchitser (\cite{dokchitser2025classificationreductiontypescurves}) studied reduction types of curves of arbitrary genus. A reduction type
$R=(S;m;p;g;\cdot)$
records the components $S$ of  the minimal regular normal crossing model of a special fiber, together with their multiplicities $m$, arithmetic genus $p$, geometric genus $g$ and intersection pairing. 
Reduction types that vary in the depth parameter $n$ of their inner chains are grouped into the same reduction family.
If $N(g)$ denotes the number of reduction families of genus $g$, then Dokchitser established the asymptotic lower bound
$ N(g)\geq g^{g-o(g)}
\text{ as }g\to\infty.$

Our classification proceeds in two steps. First,
the topological classification of degenerations is formulated in terms of topological monodromy and is determined by the triple
$(G, \sigma, S_f),$
where $G$ is a stable weighted graph of genus $g$, $\sigma$ is an
automorphism of $G$, and $S_f$ is the minimal generalized quotient (\cite{matsumoto2011pseudo}).
We classify the generalized quotient spaces, which may vary with the integer parameter $K$ determined by the screw number and the boundary valencies, into finitely many unified generalized quotient types. 
Therefore, we obtain the following upper bound.
\begin{theorem}
	For any $g\geq 2$, let $|\overline{\mathcal D}_g|$ denote the number of unified types of degenerations of genus $g$. Then 
	\[
	|\overline{\mathcal D}_g|
	\leq g^{O(g)}
	\text{ as } g\to\infty.
	\]
\end{theorem}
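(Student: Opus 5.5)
We follow the Matsumoto--Montesinos description: a degeneration of genus $g$ is determined up to topological equivalence by its topological monodromy, a pseudo-periodic map class of negative twist. This class is encoded by the triple $(G,\sigma,S_f)$. Here $G$ is the stable weighted graph recording the reduction curves and the complementary pieces. The automorphism $\sigma$ of $G$ records the periodic part, i.e.\ the permutation of pieces and annuli together with the periodic data on each piece. The minimal generalized quotient $S_f$ records the valencies and the screw numbers on the annuli. Our plan is to bound separately the number of choices for each ingredient by $g^{O(g)}$, and then multiply. A product of $O(1)$ factors each of size $g^{O(g)}$ is still $g^{O(g)}$.

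\textbf{Step 1: graphs.} A stable weighted graph of genus $g$ has at most $2g-2$ vertices and at most $3g-3$ edges, and its vertex weights (genera) sum to at most $g$. The number of such multigraphs with loops is at most the number of ways to place $3g-3$ edges among $O(g^2)$ unordered vertex pairs, which is $(g^2)^{O(g)}=g^{O(g)}$. The number of weight assignments is at most $(g+1)^{2g-2}$.

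\textbf{Step 2: automorphisms and periodic data.} An automorphism $\sigma$ is a permutation of vertices and of edge-ends, so there are at most $(2g)!\,(6g)!=g^{O(g)}$ choices. On each piece of genus $g_v$ with boundary, the periodic map is determined up to conjugacy by its order, its quotient orbifold, and its valency data $(m_j,\lambda_j,\sigma_j)$ (Nielsen). The order of a periodic map of a surface with $\chi<0$ is $O(g_v+b_v)$ by the Wiman/Hurwitz-type bound $4g+2$. By Riemann--Hurwitz the number of branch points is $O(g_v+b_v)$, and each valency triple ranges over $O(g)$ values. Hence each piece contributes $g^{O(g_v+b_v)}$ choices, and summing exponents over the pieces gives $g^{O(g)}$.

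\textbf{Step 3: screw numbers and the parameter $K$ (main obstacle).} The screw numbers on the annuli are a priori unbounded rationals: they vary with an integer parameter $K$, which depends on the screw number and the boundary valencies. This is precisely why the statement counts \emph{unified} types. The key step is to show that for fixed graph and periodic data, the generalized quotient $S_f$ depends on $K$ only through finitely many cases. Concretely, for each annulus the quotient contributes a chain of rational curves whose self-intersections come from the continued-fraction expansion of a rational number attached to the valencies and the screw number. As $K$ varies, this chain changes only by a variable-length block of $(-2)$-curves, while the combinatorial type and its endpoints remain fixed. We will take the unified type to be the pattern in which that length is a free parameter. The number of patterns per annulus is then bounded by the number of pairs of boundary valencies, which is $O(g^2)$. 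Over $O(g)$ annuli this gives $g^{O(g)}$ in total. One further point needs checking: the remaining discrete data, namely the sign/negativity condition and the choice of fractional part, must take only boundedly many values per annulus. Combining the three steps gives $|\overline{\mathcal D}_g|\le g^{O(g)}$.
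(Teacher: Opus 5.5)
Your proposal is correct. It has the same skeleton as the paper's proof: $|\overline{\mathcal D}_g|\le|\mathcal G_g|\cdot\max|\mathrm{Aut}(G)|\cdot\max|S(G,\sigma)|$, with each factor shown to be $g^{O(g)}$. Your multigraph count and your bound $(2g)!\,(6g)!$ on automorphisms play the role of the paper's graph count and its signed-permutation bound $2^{3g-3}(3g-3)!$.

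The real difference is in how you bound the periodic data. The paper works through the number $P(h)$ of conjugacy classes of periodic maps on a closed genus-$h$ surface:
\begin{itemize}
\item it controls $P(h)$ by partition asymptotics, $P(h)<\frac{2}{h+1}e^{\frac{10\pi}{\sqrt3}(h+1)}$;
\item it multiplies by $2^{r}\binom{2h+3}{h+1}^2$ for amphidrome choices and boundary markings;
\item it treats genus-$0$ pieces separately, using that a rotation of $S^2$ has exactly two fixed points, which gives at most $16g^2$ types.
\end{itemize}
You instead count directly: polynomially many values for the order and for each valency, at $O(g_v+b_v)$ points, hence $g^{O(g_v+b_v)}$ per piece. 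This is cruder ($h^{O(h)}$ rather than $e^{O(h)}$ per piece) but more elementary, and it fully suffices for $g^{O(g)}$. The paper's route buys explicit constants, namely the bound $(1061g+1061)^{10g}$, and the sharper estimate $P(h)=e^{O(h)}$.

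Your Step~3 is more explicit than the paper, which simply adopts the Ashikaga--Ishizaka unified types. The point you leave to check is settled by the Matsumoto--Montesinos fact that $K=-s(C_i)-\delta^{(1)}/\lambda^{(1)}-\delta^{(2)}/\lambda^{(2)}$ (resp.\ $-s(C_i)/2-\delta/\lambda$) is an integer $\ge -1$. So once the boundary valencies are fixed, the screw number is determined by $K$, and no fractional-part or sign choice remains.

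One small correction: Wiman's bound $4g+2$ does not apply when the capped piece has genus $0$ or $1$. The bound $n=O(g_v+b_v)$ still holds, because the quotient orbifold of a piece has nonempty boundary. This forces $-\chi^{\mathrm{orb}}\ge 1/6$, hence $n\le 6\,|\chi(\mathcal B_v)|$.
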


However, the homeomorphism class of  normally minimal singular fibers depends only on the minimal generalized quotient $S_f$ (\cite{matsumoto2011pseudo}). Consequently, we have the following corollary.
\begin{corollary}
	For any $g\geq 2$, let $|\overline{\mathcal F}_g|$ denote the number of unified types of singular fibers of genus $g$. Then
	\[
	|\overline{\mathcal F}_g|\leq |\overline{\mathcal D}_g|
	\leq  g^{O(g)}
	\text{ as } g\to\infty.
	\]
\end{corollary}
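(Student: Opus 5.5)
The corollary follows from the preceding theorem once we have a surjection from unified types of degenerations onto unified types of singular fibers. The plan is to build that map and check surjectivity; the cardinality inequality and the asymptotic bound then follow at once.

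First we define the map. By Matsumoto--Montesinos (\cite{matsumoto2011pseudo}), every topological degeneration of genus $g$ is determined, up to topological equivalence, by its monodromy triple $(G,\sigma,S_f)$. The normally minimal singular fiber of such a degeneration is, up to homeomorphism, recovered from the minimal generalized quotient $S_f$ alone: the fiber is obtained by replacing the annular and amphidrome parts of $S_f$ by chains of rational curves whose multiplicities are determined by the screw numbers and valencies. We therefore get a well-defined map
\[
\Phi\colon \mathcal D_g \to \mathcal F_g, \qquad (G,\sigma,S_f)\mapsto \text{normally minimal fiber of } S_f,
\]
from degenerations to singular fibers.

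Second, we check that $\Phi$ descends to unified types. Two generalized quotients lie in the same unified type when they differ only in the integer parameter $K$, that is, in the lengths of the chains coming from screw numbers and boundary valencies. The corresponding fibers then differ only in the lengths of the rational chains, so they lie in one unified fiber type. This is exactly how unified fiber types are to be defined, namely by letting the chain depths vary, consistent with Dokchitser's families. Hence $\Phi$ induces a map $\overline{\Phi}\colon\overline{\mathcal D}_g\to\overline{\mathcal F}_g$.

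Third, we show that $\overline{\Phi}$ is surjective. Every singular fiber of genus $g$, after passing to its normally minimal model, arises from a relatively minimal fibration, and that fibration has a topological monodromy. The Matsumoto--Montesinos correspondence says this monodromy is a pseudo-periodic map of negative twist, so the fiber lies in the image of $\Phi$. Surjectivity gives $|\overline{\mathcal F}_g|\le|\overline{\mathcal D}_g|$, and the theorem above gives $|\overline{\mathcal D}_g|\le g^{O(g)}$.

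The main obstacle is the second step: checking that the paper's notion of a unified fiber type is compatible with the unification of quotients by the parameter $K$. Concretely, varying $K$ must change only the chain lengths in the fiber and nothing else in its combinatorial type. I would verify this directly from the explicit construction of the fiber from $S_f$ in \cite{matsumoto2011pseudo}.
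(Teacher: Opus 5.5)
Your argument is correct and matches the paper's: the singular fiber's homeomorphism class depends only on $S_f$, so the projection $(G,\sigma,S_f)\mapsto S_f$ descends to unified types and is onto, giving $|\overline{\mathcal F}_g|\le|\overline{\mathcal D}_g|\le g^{O(g)}$; the paper also handles the compatibility you flagged as the main obstacle by definition, grouping quotients that differ only in $K$. One small correction: in Matsumoto--Montesinos the minimal generalized quotient $S_f$ already contains the chains of rational curves coming from the annuli and is itself homeomorphic to the normally minimal fiber, so no replacement step is needed. Surjectivity is also immediate without invoking their correspondence, since every element of $\mathcal F_g$ is by definition the central fiber of some degeneration.
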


In fact, the two classifications of singular fibers coincide. Dokchitser's reduction types are equivalent to the dual graphs of normally minimal singular fibers. Moreover, the integer parameter $K$ in the generalized quotient space coincides with the inner chain depth $n$ of the reduction type. Consequently, by combining Dokchitser's lower bound with our upper bound, we deduce a more precise asymptotic order for the number of classes of singular fibers.

\begin{theorem}
	As $g\to\infty$, the numbers of reduction families $N(g)$, unified types of singular fibers $|\overline{\mathcal F}_g|$, and unified types of degenerations $|\overline{\mathcal D}_g|$ all have the same asymptotic order:
	\[
	N(g) = |\overline{\mathcal F}_g| = g^{\Theta(g)},
	\]
	and 
	\[
	|\overline{\mathcal D}_g| =g^{\Theta(g)} .
	\]
\end{theorem}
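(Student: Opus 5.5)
The plan is to sandwich all three quantities between Dokchitser's lower bound and the upper bound of the preceding theorem and corollary. This works once we identify $N(g)$ with $|\overline{\mathcal F}_g|$ through a bijection between reduction families and unified types of singular fibers.

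\textbf{Upper bounds.} These are already available. The preceding corollary gives
\[
|\overline{\mathcal F}_g| \le |\overline{\mathcal D}_g| \le g^{O(g)} .
\]
So all that remains is to bound $N(g)$ from above and $|\overline{\mathcal F}_g|$ and $|\overline{\mathcal D}_g|$ from below.

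\textbf{Identifying the two classifications.} This is the step I expect to carry the real content.

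First, I will match the combinatorial data. A reduction type $R=(S;m;p;g;\cdot)$ records the dual graph of the minimal regular normal crossing model of the special fibre. This model is exactly a normally minimal model in Matsumoto--Montesinos' sense, with the same components, multiplicities, genera and intersection pairing. By the cited result of \cite{matsumoto2011pseudo}, the homeomorphism class of a normally minimal singular fibre is determined by the minimal generalized quotient $S_f$, and conversely $S_f$ can be read off from the weighted dual graph. This gives a bijection between reduction types and homeomorphism classes of normally minimal singular fibres of genus $g$.

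Second, I will match the groupings into families. Under this bijection, an inner chain of depth $n$ in $R$ is the chain of rational curves that resolves an annular part of the pseudo-periodic map. The length of that chain is governed by the screw number and the boundary valencies, that is, by the parameter $K$ of the generalized quotient. Checking that $K$ and $n$ correspond affinely, chain by chain, shows that varying $n$ is the same as varying $K$. Hence reduction families correspond bijectively to unified types, and
\[
N(g)=|\overline{\mathcal F}_g| .
\]

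The main obstacle is making this chain-by-chain correspondence precise, including checking that the boundary conventions on both sides agree. What I would try first is a local computation on a single annulus, using the Hirzebruch--Jung continued fraction that describes the chain.

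\textbf{Lower bounds.} With the identification in hand, Dokchitser's bound gives
\[
|\overline{\mathcal F}_g| = N(g) \ge g^{g-o(g)} = g^{\Omega(g)} .
\]
For degenerations, the map sending a degeneration to its singular fibre descends to unified types and is surjective, since every singular fibre occurs in some degeneration. Therefore
\[
|\overline{\mathcal D}_g| \ge |\overline{\mathcal F}_g| \ge g^{\Omega(g)} .
\]

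\textbf{Conclusion.} Combining the lower bounds with the upper bound $g^{O(g)}$ gives
\[
N(g)=|\overline{\mathcal F}_g|=g^{\Theta(g)} \quad\text{and}\quad |\overline{\mathcal D}_g|=g^{\Theta(g)} .
\]
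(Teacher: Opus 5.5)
Your proposal is correct and is essentially the paper's own argument. The upper bound is the preceding corollary $|\overline{\mathcal F}_g|\le|\overline{\mathcal D}_g|\le g^{O(g)}$. The lower bound is Dokchitser's $N(g)\ge g^{g-o(g)}$, transported through the identification of reduction families with unified fibre types (minimal rnc model $=$ normally minimal fibre, inner-chain depth $n\leftrightarrow K$), and then passed to degenerations via $|\overline{\mathcal F}_g|\le|\overline{\mathcal D}_g|$.

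The paper simply asserts this identification, so your annulus-by-annulus Hirzebruch--Jung check is where the real work lies. For the $g^{\Theta(g)}$ claims you only need the injection $N(g)\le|\overline{\mathcal F}_g|$, i.e.\ that a unified fibre type determines its reduction family; this requires the fibre type to record multiplicities, as the numerical chorizo space $S_f$ does.
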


For  the effective Shafarevich's conjecture, if a fibration of genus $g$ is uniquely determined by the classification of its singular fibers, then the number of  fibrations of genus $g$ is bounded above by
$ g^{\Theta(sg)}.$

We make essential use of the theory of pseudo-periodic maps developed by Matsumoto and Montesinos (\cite{matsumoto2011pseudo}). Our work builds on the  classification procedure established by Ashikaga and Ishizaka in genus $3$ (\cite{ashikaga2002classification}). Their excellent contributions provide an important foundation for our study of the number of singular fibers of genus $g$.

\section{Periodic map}
\subsection{The number of the conjugacy classes  of  periodic maps}\label{upper}
We follow Ashikaga and Ishizaka (\cite{ashikaga2002classification}) for the basic concepts, notation, and relevant results on periodic maps used in this section.

By a surface $\Sigma$, we mean an oriented connected real 2-dimensional manifold with or without boundary. When we emphasize its complex structure, we call $\Sigma$ a Riemann surface.

A \emph{periodic map} on an oriented surface $\Sigma$ is an orientation-preserving homeomorphism $f:\Sigma\to\Sigma$ such that $f^{n}=\mathrm{id}_\Sigma$ for some integer $n\ge 2$. The smallest such $n$ is called the \emph{order} (or \emph{period}) of $f$.

Let $f:\Sigma \to \Sigma$ be a periodic map of order $n$, and let $P$ be a point on the surface $\Sigma$. Let $\alpha(P)$ be the smallest positive integer such that $f^k(P) \ne P$ for $1 \le k < \alpha(P)$, and $f^{\alpha(P)}(P) = P$. The point $P$ is called a \emph{simple point} if  $\alpha(P) = n$. Otherwise, it is called a \emph{multiple point}.

Let $C\subset\Sigma$ be an oriented simple closed curve. Let $m$ be the smallest positive integer such that $f^m(C)=C$ as a set and $f^{m}$ preserves the orientation of $C$. The restriction of $f^m$ to $C$ is a periodic map of order, say, $\lambda \ge 1$. Note that $n = m\lambda$.
Let $Q$ be any point on $C$, and suppose that the images of $Q$ under the iteration of $f^m$ are ordered as $(Q,\, f^{m\sigma}(Q),\, f^{2m\sigma}(Q),\dots,\, f^{(\lambda-1)m\sigma}(Q))$ viewed in the direction of $C$, where $\sigma$ is an integer with $0 \le \sigma \le \lambda - 1$ and $\gcd(\sigma,\lambda) = 1$. Let $\delta$ be the integer satisfying 
\begin{equation*}
	\sigma \delta \equiv 1 \pmod{\lambda},~~0 \leq \delta \leq \lambda -1.
\end{equation*}
Under suitable coordinates, $C$ can be viewed as an oriented unit circle, and the action of $f^m$ on $C$ can be interpreted as a rotation by $2\pi\delta/\lambda$. We call the triple $(m,\lambda,\sigma)$ the \emph{valency} of $C$ with respect to $f$.

Let $P\in\Sigma$ be a multiple point of $f$. Choose a sufficiently small disk neighborhood $D_{P}$ of $P$, and orient $\partial D_P$ from the outside of $D_P$. The \emph{valency} of $P$ is defined to be the valency of the oriented simple closed curve $\partial D_{P}$ with respect to $f$.

The periodic map $f$ induces a cyclic cover $\Pi:\Sigma \to \Sigma'=\Sigma /\langle f \rangle$ of degree $n$. The multiple points of $f$ coincide with the ramification points of $\Pi$. We define the \emph{valency of a branch point} $P$ on $\Sigma'$ as the valency of a ramification point in $\Pi^{-1}(P)$.

By Nielsen's theorem, the conjugacy classes of periodic maps are entirely determined by the valencies of their multiple points (\cite{nielsen1937structur}).

\begin{prop}\label{important prop}
	Let $\Sigma_g$ be a closed surface of genus $g $. Denote by
	$\Pi:\Sigma_g\to \Sigma'=\Sigma_g/\langle f\rangle$  the $n$-fold cyclic covering induced by a periodic map $f:\Sigma_g\to \Sigma_g$ of order $n$. Let  $\lambda_1,\ldots, \lambda_l$ be the ramification indices of $\Pi$ and $(n/\lambda_i, \lambda_i, \sigma_i)(1\leq i \leq l)$ be the valencies of the branch points. Let $g'$ be the genus of $\Sigma'$. Then we have:
	\begin{enumerate}
		\item[(i)] (the Hurwitz formula)
		$ 2(g-1)/n = 2(g'-1) + \sum_{i=1}^l \left(1 - 1/ \lambda_i\right).$
		
		\item[(ii)] (Nielsen (\cite{nielsen1937structur}))$ \sum_{i=1}^l\ \sigma_i/\lambda_i$ is an integer.
		
		\item[(iii)] (Wiman (\cite{wiman1896ueber})) $ n \leq 4g + 2 $.
		
		\item[(iv)] (Harvey (\cite{harvey1966cyclic})) Assume $ g \geq 2 $. Set $ M = \mathrm{lcm}(\lambda_1, \ldots, \lambda_l) $. Then we have:
		\begin{enumerate}
			\item[(1)] lcm $(\lambda_1, \ldots, \widehat{\lambda_i}, \ldots, \lambda_l) = M $ for $1 \leq i \leq l $, where $\widehat{\lambda_i}$ denotes the omission of $\lambda_i$.
			
			\item[(2)] $ M|n $, and if $ g' = 0 $, then $ M = n $.
			
			\item[(3)] $ l \neq 1 $, and if $ g' = 0 $, then $ l \geq 3 $.
			
			\item[(4)] If $ 2|M $, the number of $\lambda_1, \ldots, \lambda_l $ which are divisible by the maximal power of $2$ dividing $M $ is even.
		\end{enumerate}
	\end{enumerate}
	Conversely, there exists a periodic map of $\Sigma_g$ corresponding to the data satisfying the above conditions.
\end{prop}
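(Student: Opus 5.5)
The proof rests on the classical correspondence between periodic maps of $\Sigma_g$ and surface-kernel epimorphisms from an orbifold fundamental group onto $\mathbb{Z}/n$; each item is then read off from it. The quotient $\Sigma'$ is a closed orbifold of genus $g'$ with cone points of orders $\lambda_1,\dots,\lambda_l$. Its orbifold fundamental group is
\[
\Gamma=\langle a_1,b_1,\dots,a_{g'},b_{g'},c_1,\dots,c_l \mid c_i^{\lambda_i}=1,\ \textstyle\prod_j[a_j,b_j]\prod_i c_i=1\rangle .
\]
The covering $\Pi$ corresponds to an epimorphism $\theta:\Gamma\to\mathbb{Z}/n=\langle f\rangle$ whose kernel is torsion free, i.e.\ $\theta(c_i)$ has order exactly $\lambda_i$. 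We write $\theta(c_i)=(n/\lambda_i)\,\delta_i \bmod n$, with $\delta_i$ the rotation number of $f^{n/\lambda_i}$ around the $i$-th ramification point, so that $\sigma_i\delta_i\equiv1 \pmod{\lambda_i}$ up to the orientation convention for $\partial D_P$ fixed above.

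\emph{Items (i)–(iv).} Item (i) is the Riemann–Hurwitz formula, obtained by computing $\chi(\Sigma_g)=n\chi(\Sigma'\setminus\{\text{branch points}\})+\sum_i n/\lambda_i$. Item (ii) comes from abelianizing the long relation, which gives $\sum_i\theta(c_i)=0$ in $\mathbb{Z}/n$, i.e.\ $\sum_i \delta_i/\lambda_i\in\mathbb{Z}$. Nielsen's formulation with $\sigma_i$ instead of $\delta_i$ then follows by choosing the opposite orientation convention; this bookkeeping is exactly what the definition of valency (with $\partial D_P$ oriented from outside) is designed to absorb, and I would simply cite Nielsen for it. For Harvey's conditions (iv):
\begin{enumerate}
\item[(1)] follows because $\theta(c_i)=-\sum_{j\ne i}\theta(c_j)$, so the order of $\theta(c_i)$ divides $\mathrm{lcm}_{j\neq i}\lambda_j$.
\item[(2)] holds because each $\lambda_i$ divides $n$; if $g'=0$, the elements $\theta(c_i)$ generate $\mathbb{Z}/n$, forcing $M=n$.
\item[(3)] is a special case of (1) with $l=1$, which gives $\lambda_1=1$, a contradiction. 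When $g'=0$ and $l\le2$, Riemann–Hurwitz forces $g\le 0$.
\item[(4)] is a parity argument: project to the $2$-part $\mathbb{Z}/2^e$. The images of the $c_i$ of maximal $2$-power order are odd multiples of $2^{e'}$, and they must sum to zero modulo the relevant power of $2$, so their number is even.
\end{enumerate}

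\emph{Wiman's bound (iii).} This is the main obstacle, and I expect it to need a case analysis, which I would cite from Wiman/Harvey. I would use (i) together with (2) and (3). If $g'\ge1$, then $2(g-1)/n\ge l/2\ge1$ (using $l\ne1$), or $l=0$ and $n\le g-1$. If $g'=0$, then $l\ge3$ and $\lambda_i\mid n$, and one minimizes $-2+\sum(1-1/\lambda_i)$ subject to (1). The extremal case is $(2,\lambda,n)$ with $\mathrm{lcm}(2,\lambda)=n$, which gives $2(g-1)/n\ge 1/2-1/n$ for the type $(2,n,n)$, i.e.\ $n\le 4g+2$. Other signatures give larger defects, so the remaining work is to check the finitely many small signature shapes.

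\emph{Converse.} Given data satisfying the conditions, define $\theta(c_i)=(n/\lambda_i)\delta_i$, with $\delta_i$ determined by $\sigma_i$. Condition (ii) makes $\sum\theta(c_i)=0$, so $\theta$ respects the relation once we set $\theta(a_j)=\theta(b_j)=0$ (or $\theta(a_1)=1$ to ensure surjectivity when $g'\ge1$). When $g'=0$, surjectivity comes from $M=n$. The kernel then gives a closed surface of genus $g$ by (i), and Nielsen's theorem identifies the resulting map up to conjugacy.
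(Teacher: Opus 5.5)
The paper gives no proof of this proposition; it quotes it from Hurwitz, Nielsen, Wiman and Harvey. Your route through the surface-kernel epimorphism $\theta:\Gamma\to\mathbb{Z}/n$ is the standard one. Your arguments for (i) and for (iv)(1)--(4) are fine, since the latter only use that $\theta(c_i)$ has order $\lambda_i$.

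\textbf{The $\sigma/\delta$ error.} The genuine error is the identification $\theta(c_i)=(n/\lambda_i)\delta_i$. The local monodromy around the $i$-th branch point is the deck transformation that carries a point of $\partial D_P$ to the next point of its $\langle f^{n/\lambda_i}\rangle$-orbit in the direction of $\partial D_P$. By the very definition of $\sigma_i$, this is $f^{(n/\lambda_i)\sigma_i}$. So $\theta(c_i)=\pm(n/\lambda_i)\sigma_i$, with one global sign. The rotation number $\delta_i$ of $f^{n/\lambda_i}$ is the \emph{inverse} of the monodromy exponent modulo $\lambda_i$, not the exponent itself.

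Your proposed repair does not work. Reversing the orientation convention replaces $(\sigma_i,\delta_i)$ by $(\lambda_i-\sigma_i,\lambda_i-\delta_i)$. This preserves each integrality condition separately and never exchanges $\sigma$ and $\delta$. The two conditions are genuinely different. For example, the datum $g=2$, $n=5$, $g'=0$ with total valency $1/5+1/5+3/5$ satisfies (i)--(iv), but $\delta=(1,1,2)$ and $\sum_i\delta_i/\lambda_i=4/5$.

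Consequently your converse, which sets $\theta(c_i)=(n/\lambda_i)\delta_i$ and appeals to (ii), fails on this datum. It gives $\theta(c_1)+\theta(c_2)+\theta(c_3)=4\neq 0$ in $\mathbb{Z}/5$, so $\theta$ does not even respect the relation of $\Gamma$. The fix is to use $\sigma_i$ throughout:
\begin{enumerate}
\item[(a)] Abelianizing the long relation then gives Nielsen's condition $\sum_i\sigma_i/\lambda_i\in\mathbb{Z}$ directly.
\item[(b)] In the converse, $\theta(c_i)=(n/\lambda_i)\sigma_i$ has order exactly $\lambda_i$ because $\gcd(\sigma_i,\lambda_i)=1$. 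This is the torsion-freeness of $\ker\theta$, which you need but did not state.
\item[(c)] The same monodromy computation, read backwards, shows that the constructed map has the prescribed valencies.
\end{enumerate}

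\textbf{Wiman's bound.} Your sketch of (iii) picks the wrong extremal case and miscalculates. The signature $(2,n,n)$ gives $2(g-1)/n=1/2-2/n$, i.e.\ $n=4g$. The inequality $2(g-1)/n\ge 1/2-1/n$ that you write would give $n\le 4g-2$, which is false, since $n=4g+2$ is attained. The extremal member of your family $(2,\lambda,n)$ is $(2,n/2,n)$ with $n/2$ odd, so that $\mathrm{lcm}(2,n/2)=n$. It gives $1/2-3/n$, which yields exactly $n=4g+2$.

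The remaining genus-$0$ cases are also not automatic. For $l=3$ the lcm condition (1) is essential: Hurwitz together with (2)--(4) alone would allow the signature $(2,3,18)$ with $g=2$. So the case analysis must genuinely use (1), for instance in the form $\lambda_1\lambda_2\ge n$.

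Since you, like the paper, would ultimately cite Wiman here, this is less serious than the $\sigma/\delta$ confusion. Still, as written your sketch does not establish (iii).
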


For convenience, we usually denote the valency at each multiple point of $f$ by $\sigma_{1}/\lambda_{1} + \cdots + \sigma_{l}/\lambda_{l}$,
which we call the \emph{total valency}.

Let $\Sigma_g$ be a closed surface of genus $g \geq 2$, and $\mathcal{P}(g)$ (resp. $\mathcal{P}_n(g)$) be the set of all (resp. all $n$-th order) conjugacy classes of periodic maps of $\Sigma_g$.  
By Proposition~\ref{important prop}, we know that
\begin{equation}\label{P(g)}
	\mathcal{P}(g)\cong\{(g',n,\{\lambda_i,\sigma_i\}) ~|~\text {satisfy  (i)-(iv)  of  Proposition}~\ref{important prop}\}.
\end{equation}

\begin{lemma}\label{l}
	With the notation as above, we have
	\begin{enumerate}[(1)]
		\item $g' \leq (g + 1)/2$;
		\item $2 \leq l \leq l_M:=2(g - 2g' + 1)$.
	\end{enumerate}
\end{lemma}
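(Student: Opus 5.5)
The plan is to derive both bounds directly from the Hurwitz formula in Proposition~\ref{important prop}(i), together with Harvey's condition (iv)(3) and the fact that the order satisfies $n \geq 2$ and every ramification index satisfies $\lambda_i \geq 2$.

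For (1), we rewrite (i) as
\[
2(g-1) = n\Bigl(2(g'-1) + \sum_{i=1}^l (1-1/\lambda_i)\Bigr).
\]
\begin{itemize}
\item If $g' = 0$ there is nothing to prove.
\item If $g' \geq 1$, the bracket is at least $2(g'-1) \geq 0$, and every term of the sum is nonnegative. Since $n \geq 2$, this gives $2(g-1) \geq 2\cdot 2(g'-1)$, so $g' \leq (g+1)/2$. Equivalently, $2g-2 \geq n(2g'-2) \geq 2(2g'-2)$ whenever $2g'-2 \geq 0$.
\end{itemize}
In every case $g-2g'+1 \geq 0$, so $l_M \geq 0$ makes sense.

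For (2), the upper bound on $l$ uses $1-1/\lambda_i \geq 1/2$, so $\sum_i (1-1/\lambda_i) \geq l/2$. Substituting into (i) and using $n \geq 2$ gives
\[
g-1 \geq 2(g'-1) + l/2 \quad\text{or, more precisely,}\quad \frac{2(g-1)}{n} \geq 2(g'-1) + \frac{l}{2}.
\]
Since $2(g-1)/n \leq g-1$, we get $l/2 \leq g-1-2g'+2 = g-2g'+1$, that is, $l \leq 2(g-2g'+1) = l_M$. This step requires $2(g-1)/n \leq g-1$, which holds because $n \geq 2$ and $g \geq 2$.

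The lower bound $l \geq 2$ is the subtle part. Harvey's condition (iv)(3) only excludes $l = 1$, so the case $l = 0$ (an unramified action, possible when $g' \geq 2$) must be handled separately. I would treat this as the main point to check. Either the paper's convention is that $l$ counts the branch points relevant to the enumeration, with $l = 0$ allowed as a degenerate case, or the lemma is meant for $l \geq 1$, in which case (iv)(3) immediately gives $l \geq 2$. So I would invoke (iv)(3) for $l \neq 1$, and for $g' = 0$ use $l \geq 3$, which is stronger. I would note that the unramified case, where $\{\lambda_i\}$ is empty, contributes trivially to the counting in \eqref{P(g)}, so the bound $2 \leq l$ is applied only when $f$ has multiple points.
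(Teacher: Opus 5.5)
Your proof of (1) and of $l\le l_M$ is correct and is the paper's argument, written out in full: the Hurwitz formula combined with $n\ge 2$ and $1-1/\lambda_i\ge 1/2$.

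On the lower bound, you are right and the paper's one-line proof is not. The Hurwitz formula gives only part of it:
\begin{itemize}
\item for $g'=0$ it gives $l\ge 3$, since $\sum_i(1-1/\lambda_i)=2+2(g-1)/n>2$ and each term is $<1$;
\item for $g'=1$ it gives only $l\ge 1$;
\item excluding $l=1$ needs Proposition~\ref{important prop}(iv)(3), or more simply (ii), because a single $\sigma_1/\lambda_1\in(0,1)$ is never an integer;
\item for $g'\ge 2$ the value $l=0$ genuinely occurs.
\end{itemize}

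You should commit to this rather than hedge between possible conventions. A free involution of $\Sigma_3$ with quotient $\Sigma_2$ satisfies $2(3-1)/2=2(2-1)$, so $l=0$, and here even $l_M=0$. This contradicts $2\le l\le l_M$ outright. In general $l=0$ happens exactly when $f$ acts freely, which forces $n\mid g-1$ and $g'=1+(g-1)/n\ge 2$. The correct statement is therefore: $l\le l_M$ and $l\ne 1$, with $l\ge 2$ whenever $f$ has a multiple point.

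Your remark that the free case contributes trivially should also be made precise. A free action has empty valency data and its $g'$ is forced by $n$. So, via the correspondence \eqref{P(g)} coming from Nielsen's theorem, there is at most one such conjugacy class for each divisor $n\ge 2$ of $g-1$. The proof of Theorem~\ref{Th:P(g)} sums only over $2\le l\le l_M$ and so silently omits these classes. Adding at most $g-1$ of them is negligible against the exponential bound there, but it has to be done explicitly.
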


\begin{proof}
	Since $\lambda_i \geq 2$ and $n\geq 2$, the results are directly from the Hurwitz formula, Proposition~\ref{important prop} (i). 
\end{proof}

First we introduce some basics of partition numbers.

\begin{definition}[\cite{Comtet1974}]\label{part}
	A partition of $n$ into exactly $k$ positive integers is an unordered sum of $n$ that uses exactly $k$ positive integers. The number of such partitions is denoted by $p(n, k)$.
\end{definition}
It is known that (\cite{orucc2016number}), for any positive integers $n, k$ with $k\leq n-1$,	
\begin{equation}\label{p(n,k)}
	p(n, k) \leq \frac{5.44}{n - k} {\mathrm e}^{\pi \sqrt{\frac{2(n - k)}{3}}},
\end{equation}
where the right-hand side is an increasing function of  $(n-k)$.

\begin{theorem}\label{Th:P(g)}
	For every integer $n$ with $2 \leq n \leq 4g + 2$, the number $P_n(g) := |\mathcal{P}_n(g)| $ satisfies 
	\[
	\begin{split}
		P_n(g) < \frac{1}{2} \left( \frac{2.72}{3(g + 1)} \right)^2 {\mathrm e}^{\frac{10}{\sqrt{3}} \pi (g + 1)}.
	\end{split}
	\]
	In particular,
	\[
	P(g):=|\mathcal{P}(g)|=\sum_{n=2}^{4g+2}P_n(g)
	<P_M(g):=
	\frac{2}{g+1}\, {\mathrm e}^{\frac{10\pi}{\sqrt{3}}(g+1)} .
	\]
\end{theorem}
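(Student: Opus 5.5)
For fixed $n$, I would bound $P_n(g)$ by counting the admissible data $(g',\{(\lambda_i,\sigma_i)\})$ in \eqref{P(g)}. Since $\lambda_i\mid n$ and $n$ is fixed, a class is determined by $g'$ together with an \emph{unordered} multiset of pairs $(\lambda_i,\sigma_i)$, with $2\le\lambda_i\le n\le 4g+2$ and $1\le\sigma_i\le\lambda_i-1$. The only constraints I will keep are the Hurwitz formula and the bounds of Lemma~\ref{l}; the conditions (ii) and (iv) only shrink the set, so dropping them still gives an upper bound. The idea is to turn the unordered multiset of pairs into a partition of an integer, so that the estimate \eqref{p(n,k)} applies.

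\textbf{Step 1 (injective encoding).} Assign to each pair the positive integer $c(\lambda,\sigma)=\binom{\lambda-1}{2}+\sigma$, or a similar triangular-number encoding. This map is injective on pairs with $1\le\sigma<\lambda$, and $c(\lambda,\sigma)=O(\lambda^2)$. A multiset of pairs is then recorded by the multiset $\{c_i\}$, that is, by a partition of $N:=\sum_i c_i$ into exactly $l$ parts. To bound $N$, I would use Hurwitz in the form
\[
\sum_i\Bigl(1-\frac1{\lambda_i}\Bigr)=\frac{2(g-1)}n-2(g'-1).
\]
Combined with $\lambda_i\le 4g+2$, this gives $N\le C(g+1)^2$ for an explicit constant $C$. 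It is also possible that splitting into two independent partitions, one for $\{\lambda_i\}$ and one for $\{\sigma_i\}$ indexed compatibly, yields the squared prefactor $\bigl(2.72/(3(g+1))\bigr)^2$ in the statement. I would first try the encoding that reproduces exactly that prefactor: two partitions, each of an integer with $N-k\le \tfrac{75}{4}(g+1)^2$. The reason is that $\pi\sqrt{2X/3}=\tfrac{5}{\sqrt3}\pi(g+1)$ exactly when $X=\tfrac{25}{2}(g+1)^2$, and the product of two such factors gives the exponent $\tfrac{10}{\sqrt3}\pi(g+1)$.

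\textbf{Step 2 (sum over $l$ and $g'$).} For fixed $g'$ and $l$, the number of multisets is at most $p(N,l)$ summed over the admissible values of $N$. Since the right side of \eqref{p(n,k)} increases in $N-k$, I replace every term by the worst case $N-k\le X_{\max}$. I then sum over $2\le l\le l_M$ and $0\le g'\le (g+1)/2$ using Lemma~\ref{l}. These sums contribute only polynomial factors in $g$, and I would absorb them, together with the $1/2$, into the prefactor. This absorption is the tight part: the constants $5.44$, $2.72$ and $3(g+1)$ suggest that the polynomial counting factors must be cancelled against the $1/(N-k)$ in \eqref{p(n,k)} rather than simply multiplied in.

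\textbf{Step 3 (sum over $n$).} Summing the bound for $P_n(g)$ over the $4g+1$ values $2\le n\le 4g+2$ gives
\[
P(g)<(4g+1)\cdot\tfrac12\Bigl(\tfrac{2.72}{3(g+1)}\Bigr)^2 e^{\frac{10\pi}{\sqrt3}(g+1)},
\]
and an elementary inequality shows this is $\le \tfrac{2}{g+1}e^{\frac{10\pi}{\sqrt3}(g+1)}$.

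I expect the main obstacle to be Step 1: finding an encoding that is injective on unordered multisets of pairs and at the same time keeps the partitioned integer at most of order $(g+1)^2$ with the right constant. A naive encoding either loses injectivity, because the two marginal partitions do not determine how the $\lambda_i$ and $\sigma_i$ are paired, or inflates $N$. I would first try ordering the pairs lexicographically and encoding them by cumulative sums, checking the constant against the target exponent $\tfrac{10}{\sqrt3}\pi(g+1)$.
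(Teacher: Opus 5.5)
\textbf{There is a genuine gap: your proposal does not establish the inequality.} The step you call the main obstacle is left open. The numerical target you set for it is inconsistent: $N-k\le\tfrac{75}{4}(g+1)^2$ in one sentence, $X=\tfrac{25}{2}(g+1)^2$ in the next. In addition, your Step 2 sums $p(N,l)$ over $N$. That uses up the $1/(N-k)$ factor you later want to cancel the $(g',l)$-sum against. With a target prefactor that decays like $(g+1)^{-2}$, nothing can then be absorbed.

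The missing idea is the pair of linearizing substitutions the paper uses.
\begin{itemize}
\item Put $m_i=n/\lambda_i$. Hurwitz becomes $\sum_i m_i=N_l:=2n(g'-1)+nl-2(g-1)$. So for fixed $(n,g',l)$ the multiset $\{\lambda_i\}$ is a partition of a \emph{determined} integer into $l$ parts, and no sum over the partitioned integer is needed. By Proposition~\ref{important prop}(iii) and Lemma~\ref{l}, $N_l-l<18(g+1)^2$.
\item Put $\tau_i=n\sigma_i/\lambda_i\in\{1,\dots,n-1\}$. Condition (ii) becomes $\sum_i\tau_i=nt$, with $nt-l<(n-1)l<8(g+1)^2$.
\end{itemize}
Applying \eqref{p(n,k)} to both partitions gives $\frac{5.44^2}{144(g+1)^4}e^{(2\sqrt3+4/\sqrt3)\pi(g+1)}$, and $2\sqrt3+\frac{4}{\sqrt3}=\frac{10}{\sqrt3}$. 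The number of pairs $(g',l)$ is at most $(g+1)(2g+1)<2(g+1)^2$. Multiplying gives exactly $\frac{5.44^2}{72(g+1)^2}=\frac12\bigl(\frac{2.72}{3(g+1)}\bigr)^2$. So the split is asymmetric ($18$ and $8$), not symmetric, and it is the two factors $1/(N-k)$ that pay for the $(g',l)$-sum. Your Step 3 then goes through as you wrote it.

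\textbf{The second substitution also removes your injectivity worry.} Since $\gcd(\sigma_i,\lambda_i)=1$, the fraction $\tau_i/n$ in lowest terms is $\sigma_i/\lambda_i$. So for fixed $n$ the multiset $\{\tau_i\}$ alone determines the multiset of pairs, and $g'$ via Hurwitz. There is no pairing problem, and the $\lambda$-count is only an overcount. Strictly, $t$ depends on the $\sigma_i$ and should be summed over $1\le t\le l-1$. Because $\{\tau_i\}$ is already a complete invariant, you can drop the $\lambda$-count entirely, and then this extra sum costs nothing against the target.

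\textbf{Your own encoding $c(\lambda,\sigma)=\binom{\lambda-1}{2}+\sigma$ is injective, but its size bound is not justified.} Using only $\lambda_i\le 4g+2$ and $l\le 2g+2$ gives $N=O(g^3)$, hence only an $e^{O(g^{3/2})}$ bound. You need $\lambda_i\le n$ together with $l\le 4(g-1)/n+4$, which follows from Hurwitz since each $1-1/\lambda_i\ge\frac12$. This gives $N<\frac{ln^2}{2}\le 2(g-1)n+2n^2=O(g^2)$. Even then, $N$ is not fixed by $(n,g',l)$, so summing over it cancels the $1/(N-l)$ gain. The result has the form $C(g+1)^2e^{\pi\sqrt{80/3}\,(g+1)}$. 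Its smaller exponent beats the target only for sufficiently large $g$, so small genera would need a separate argument.
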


\begin{proof}
	We will proceed in several steps.
	
	\emph{Step $1$}.    
	For any $2\leq l\leq l_M$, let $\Lambda_l$ be the set of multi-sets of $\left\{\lambda _ {1}, \lambda _ {2}, \cdots, \lambda _ {l} \right\} $ satifying (i)-(iv) of Proposition~\ref{important prop}. We know that the ramification indices $\lambda_i$ are integers satisfying $2 \leq \lambda_i \leq n$ and $\lambda_i \mid n$. By substituting $m_i = n/\lambda_i$, it follows that the $m_i$ are also positive integers. Then we have 
	\begin{align*}
		|\Lambda_l|&\leq\#\Big\{ \{\lambda_1,\ldots,\lambda_l\}~| ~~\lambda_i|n,~	2(g-1)=2n(g^{\prime}-1)+\sum_{i=1}^{l}(n-n/{\lambda_{i}})\Big\}\\
		&=\#\Big\{\{m_1,\ldots,m_l\}~|~~ m_i|n,~	2(g-1)=2n(g'-1)+\sum_{i=1}^{l}(n-m_i)\Big\}\\
		&\leq\#\{\{m_1,\ldots,m_l\}~| ~~	m_1+\cdots+m_l= 2n(g' - 1) + nl - 2(g - 1)\}\\
		&=p(N_l, l),\,\text{where}\, N_l = 2n(g' - 1) + nl - 2(g - 1)\geq l.
	\end{align*}
	By  Proposition~\ref{important prop} (iii) and Lemma~\ref{l}, we have		
	\begin{align*}
		N_l- l &= 2n(g' - 1) + (n - 1)l - 2(g - 1) \\
		&\leq 2(4g + 2)\left((g + 1)/2 - 1\right) + 2(4g + 1)(g - 2g' + 1) - 2(g - 1) \\
		&\leq (4g + 2)(g - 1) + 2(4g + 1)(g + 1) - 2(g - 1) \\
		&< 18(g + 1)^2.
	\end{align*}
	Hence, for any $2\leq l \leq l_M$, by the inequality~\eqref{p(n,k)},
	\begin{equation}\label{Lambda}
		|\Lambda_l| \leq P(N_l, l) \leq\frac{5.44}{N_l - l} {\mathrm e}^{\pi \sqrt{\frac{2(N_l - l)}{3}}}< \frac{5.44}{18(g + 1)^2} {\mathrm e}^{2\sqrt{3}\pi(g + 1)}.
	\end{equation}
	
	\emph{Step $2$}. Let $\tilde{\lambda}=\left\{ \lambda _ { 1 },  \cdots, \lambda _ { l } \right\}\in \Lambda_l$ and $\Sigma_{\tilde{\lambda}}$ be the set of  multi-sets $\{\frac{\sigma_1}{\lambda_1},\ldots,\frac{\sigma_l}{\lambda_l}\} $ satisfying (ii) of Proposition~\ref{important prop}.    
	Define  the integer
	\[
	t_{\tilde{\lambda}}:= \frac{\sigma_1}{\lambda_1} + \frac{\sigma_2}{\lambda_2} + \cdots + \frac{\sigma_l}{\lambda_l}.
	\]
	Thus,
	\begin{align*}
		|\Sigma_{\tilde{\lambda}}|
		&\leq \# \left\{ \left\{ \frac{\sigma_1}{\lambda_1}, \dots, \frac{\sigma_l}{\lambda_l} \right\} \;\middle|\; \frac{n\sigma_1}{\lambda_1} +  \frac{n\sigma_2}{\lambda_2} \cdots +  \frac{n\sigma_l}{\lambda_l}  = nt_{\tilde{\lambda}} \right\} \\
		&\leq \# \Big\{ \{\tau_1, \dots, \tau_l\} \Bigm| \tau_1 + \cdots + \tau_l = nt_{\tilde{\lambda}} \Big\} \\
		&= p(nt_{\tilde{\lambda}}, l).
	\end{align*}
	By Lemma~\ref{l}, we have
	\[
	\begin{aligned}
		n t_{\tilde{\lambda}} - l &< n l - l \leq 2(4 g + 2 - 1)(g - 2 g' + 1) \leq  8(g + 1)^2.
	\end{aligned}
	\]
	Thus, by the inequality~\eqref{p(n,k)},
	\begin{equation}\label{Sigma}
		|\Sigma_{\tilde{\lambda}}| \leq p(n t_{\tilde{\lambda}}, l)\leq \frac{5.44}{n t_{\tilde{\lambda}} - l} {\mathrm e}^{\pi \sqrt{\frac{2(n t_{\tilde{\lambda}} - l)}{3}}} < \frac{5.44}{8(g + 1)^2} {\mathrm e}^{\frac{4}{\sqrt{3}} \pi (g + 1)}.
	\end{equation}
	
	Note that 	
	\begin{equation*}
		\begin{split}
			P_n(g) = |\mathcal{P}_n(g)|
			= \sum_{g' = 0}^g \sum_{l=2}^{l_M}
			\sum_{\tilde{\lambda} \in \Lambda_l}
			|\Sigma_{\tilde{\lambda}}|.
		\end{split}
	\end{equation*}
	This completes the proof by combining~\eqref{P(g)}, \eqref{Lambda}, and~\eqref{Sigma}.
	\end{proof}
In the above theorem,  we give an upper bound of $P(g)$ in  exponential form. We will show that we cannot expect an upper bound of  $P(g)$ in polynomial form in the next subsection.
	
\subsection{No polynomial upper bound for periodic maps}
First we will give some known inequalities ([\cite{AblowitzFokas2008,Comtet1974}]) which will be used in the following context. For any positive integer $n\in \mathbb{N}^*$, 
\begin{align}\label{Stirling}
	\frac{(n+1)^n}{{\mathrm e}^n} < n! < \frac{(n+1)^{n+1}}{{\mathrm e}^n}.
\end{align}
If the positive integer $k = O(n^{1/3})$, then for $n \to \infty$
\begin{align}\label{comb}
	p(n, k) \sim \frac{1}{k!} \binom{n-1}{k-1}.
\end{align}

\begin{lemma}\label{no_poly-up}
	For $k=\left\lfloor n^{\frac{1}{3}}\right\rfloor$ where \(\lfloor\cdot\rfloor\) denotes the floor function, there is no  upper bound of the function $p(n,k)$ of $n$ in polynomial form.
\end{lemma}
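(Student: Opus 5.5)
We use the asymptotic \eqref{comb} together with the Stirling bounds \eqref{Stirling} to show that $p(n,k)$ with $k=\lfloor n^{1/3}\rfloor$ eventually exceeds $n^d$ for every fixed $d$. Since $k=O(n^{1/3})$, \eqref{comb} gives $p(n,k)\ge \frac12\cdot\frac{1}{k!}\binom{n-1}{k-1}$ for all large $n$. It therefore suffices to show that
\[
\log\Big(\tfrac{1}{k!}\tbinom{n-1}{k-1}\Big)\Big/\log n\to\infty .
\]

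For the binomial coefficient we use the elementary bound $\binom{n-1}{k-1}\ge \big(\frac{n-1}{k-1}\big)^{k-1}$. For the factorial we use the upper bound in \eqref{Stirling}, $k!<(k+1)^{k+1}/{\mathrm e}^k$. Combining these,
\[
\log p(n,k)\ \ge\ (k-1)\log\frac{n-1}{k-1}-(k+1)\log(k+1)+k-\log 2 .
\]

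With $k\approx n^{1/3}$, the first term is about $k\log(n^{2/3})=\tfrac23 k\log n$. The second term is about $k\log(n^{1/3})=\tfrac13 k\log n$. Hence
\[
\log p(n,k)\ge \big(\tfrac13-o(1)\big)\,n^{1/3}\log n .
\]
This grows faster than $d\log n$ for any constant $d$, so $p(n,k)>n^d$ for large $n$. Thus no polynomial bound $p(n,k)\le Cn^d$ can hold, which proves Lemma~\ref{no_poly-up}. As a remark, this even gives $p(n,k)\ge {\mathrm e}^{c n^{1/3}\log n}$.

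The main obstacle is the bookkeeping with the floor function. We have $n^{1/3}-1<k\le n^{1/3}$, so $\frac{n-1}{k-1}\ge \frac{n-1}{n^{1/3}}\ge n^{2/3}/2$ for $n\ge2$. Likewise $k+1\le 2n^{1/3}$. These absorb the error terms into $O(k)$, which is $o(k\log n)$. We also need $k\ge 2$ so that $k-1>0$, which holds for $n\ge 8$.

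Finally, \eqref{comb} is only asymptotic, so we apply it in the form "ratio $\to1$", which gives the factor-$\frac12$ lower bound for large $n$. If one prefers to avoid \eqref{comb}, there is a direct route. Each composition of $n$ into $k$ positive parts maps to its partition, and at most $k!$ compositions give the same partition. Since there are $\binom{n-1}{k-1}$ compositions, this yields $p(n,k)\ge \binom{n-1}{k-1}/k!$ outright, and the argument above goes through unchanged.
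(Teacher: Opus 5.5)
Your argument is correct and follows essentially the same route as the paper. Both proofs use a Stirling-based lower bound on $\frac{1}{k!}\binom{n-1}{k-1}$ to show it eventually exceeds every $n^d$, and then pass to $p(n,k)$; you get $\log \ge (\frac13-o(1))n^{1/3}\log n$, while the paper writes out the explicit bound $\frac1n\bigl(\frac{{\mathrm e}^2}{2}n^{1/3}\bigr)^{n^{1/3}-2}$.

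Your composition-counting inequality $p(n,k)\ge \binom{n-1}{k-1}/k!$ is the better way to close the argument. The Erd\H{o}s--Lehner asymptotic behind \eqref{comb} is only guaranteed for $k=o(n^{1/3})$, so it does not cover $k=\lfloor n^{1/3}\rfloor$, and the lemma needs only this one-sided bound.
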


\begin{proof}
	For sufficiently large $n$, by~\eqref{Stirling}, we have
	\begin{align*}
		f(k,n) &:= \frac{1}{k!}\binom{n-1}{k-1} \\
		&> \frac{1}{k^3} \left(\frac{{\mathrm e}^2(n-k+1)}{k^2}\right)^{k-1} \\
		&> \frac{1}{n} \left( \frac{{\mathrm e}^2}{2} n^{1/3} \right)^{n^{1/3}-2}.
	\end{align*}
	For any positive integer $d$,
	\[
	0 \leq \lim_{n \to \infty} \frac{n^d}{f(k,n)} \leq \lim_{n \to \infty} \frac{n^{d+1}}{\Big( \frac{1}{2}{\mathrm e}^2 n^{1/3} \Big)^{n^{1/3}-2}} = 0.
	\]
	Therefore, $f(k,n)$ has no upper bound in polynomial form. By~\eqref{comb}, this completes the proof.
\end{proof}

We now prove that  the number  $P(g)$ of conjugacy classes of  periodic maps of $\Sigma_g$ does not admit a polynomial upper bound.

\begin{theorem}
	There is no polynomial upper bound for $P(g)$.
\end{theorem}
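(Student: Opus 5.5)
We show that $P(g)$ grows faster than any polynomial by exhibiting, for each large $g$, a family of pairwise non-conjugate periodic maps of $\Sigma_g$ that is in bijection with a set of partitions covered by Lemma~\ref{no_poly-up}. Since the conjugacy class is determined by the data $(g',n,\{(\lambda_i,\sigma_i)\})$ by \eqref{P(g)}, it suffices to count admissible data directly, and we work in a fixed order so the divisibility conditions become simple.

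\emph{Choice of family.} Fix a prime order $n=p\geq 3$ and take quotient genus $g'=0$. Then every branch point has $\lambda_i=p$, so $M=p=n$ and condition (iv)(1),(2) holds. Condition (iv)(4) is vacuous since $p$ is odd. The Hurwitz formula (i) gives
\[
2(g-1)=-2p+l(p-1),
\]
so $l$ branch points give genus $g=1+\tfrac{(l-2)(p-1)}{2}$. The valencies are $\sigma_i\in\{1,\dots,p-1\}$, subject to the Nielsen condition (ii) $\sum\sigma_i\equiv 0 \pmod p$ and (iv)(3), i.e.\ $l\geq 3$. By the converse in Proposition~\ref{important prop}, each unordered multiset $\{\sigma_1,\dots,\sigma_l\}$ with $\sum\sigma_i = tp$ gives a periodic map, and distinct multisets give distinct conjugacy classes.

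\emph{The count.} The simplest choice is $p=3$, $\sigma_i\in\{1,2\}$. This yields only about $l/3$ classes, which is polynomial, so a fixed order cannot work: the order must grow with $g$. So take $l$ and $p$ both growing, with $p$ prime of size comparable to $l^3$, so that $g\approx lp/2$. The multisets $\{\sigma_1,\dots,\sigma_l\}\subset\{1,\dots,p-1\}$ with $\sum\sigma_i=tp$ for a fixed $t$ are exactly the partitions of $tp$ into $l$ parts, each part at most $p-1$. Choose $t=1$. Then every part is automatically $\leq p-l+1<p$, so the count is exactly $p(p,l)$. Taking $l=\lfloor p^{1/3}\rfloor$, Lemma~\ref{no_poly-up} shows that $p(p,l)$ grows faster than every polynomial in $p$.

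\emph{Main obstacle: converting to $g$.} The bound from Lemma~\ref{no_poly-up} is in terms of $p$, while we need it in terms of $g$, and for this subsequence $g\approx p^{4/3}/2$. A polynomial bound $P(g)\leq Cg^d$ would therefore give $p(p,\lfloor p^{1/3}\rfloor)\leq C'p^{4d/3}$ for infinitely many primes $p$. To contradict this, the lemma must be used along primes: its proof via \eqref{comb} and \eqref{Stirling} gives an explicit lower bound $f(k,n)$ that beats every polynomial as $n\to\infty$ through any sequence, in particular through primes. It is also enough to refute polynomial bounds along an infinite sequence of genera, since a polynomial upper bound would have to hold for all $g$. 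We also note that $g$ here is automatically an integer, because $p-1$ is even.
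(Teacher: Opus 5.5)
Your proposal is correct and is essentially the paper's proof. Both use prime order, $g'=0$, $l=\lfloor p^{1/3}\rfloor$ branch points all with $\lambda_i=p$, and Nielsen sum $\sum\sigma_i=p$ to get at least $p(p,l)$ conjugacy classes, then apply Lemma~\ref{no_poly-up}; your explicit transfer from $p$ to $g\approx p^{4/3}/2$ spells out a step the paper leaves implicit. One harmless slip: the Hurwitz formula gives $g=(l-2)(p-1)/2$, not $1+(l-2)(p-1)/2$ (e.g.\ $p=l=3$ yields $g=1$), but this does not affect the growth-rate argument.
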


\begin{proof}
	Consider  periodic maps of $\Sigma_g$ with order  $n=q$ which is a prime number.
	By the Hurwitz formula, we have
	\[
	\frac{2(g-1)}{q} = 2(g'-1) + \sum_{i=1}^l \left(1 - \frac1{\lambda_i}\right),
	\]
	Since $ \lambda_i |n$ and $ \lambda_i \geq 2$, we know that $\lambda_i  = q$ for $i=1,\ldots,l$. So for $g'=0$, we have that
	$$l=2+\frac{2g}{q-1}\in \mathbb{N}^*.$$
	When $l=\left\lfloor q^{\frac 13}\right\rfloor$, we obtain that
	\begin{equation*}\label{eq g}
		g=g_q:=(\left\lfloor q^{\frac 13}\right\rfloor-2)\cdot \frac{q-1}{2}
	\end{equation*}
	By (ii) of Proposition~\ref{important prop},  we have
	\[m:=\sum_{i=1}^l\frac{\sigma_i}{\lambda_i}=\frac{\sigma_1+\ldots+\sigma_l}q\in\mathbb{N}^*.\]
	It is possible that $m=1$ by the choice of $l$, so
	\begin{align*}
		P(g_q)\geq&\#\{(g'=0,n=q,\{\lambda_i,\sigma_i\}) ~|~\text {satisfy  (i)-(iv)  of  Proposition}~ \ref{important prop}\}\\
		\geq&\#\Big\{\{ \sigma_1, \ldots, \sigma_l \}|  \sigma_1 + \cdots + \sigma_l = q \Big\}\\
		=&p(q,l).
	\end{align*}    
	Therefore, $P(g)$ admits no polynomial upper bound by Lemma~\ref{no_poly-up}. 
\end{proof}

\section{Dual graph}\label{graph}
\subsection{Pseudo-periodic map}
Let $f: \Sigma_g \to \Sigma_g$ be an orientation-preserving homeomorphism of a closed surface of genus $g$. We call $f$ a \emph{pseudo-periodic map} if it is isotopic to a homeomorphism $f':\Sigma_{g}\to\Sigma_{g}$ satisfying the following conditions:

\begin{enumerate}[(1)]
	\item There exists a disjoint union of simple closed curves $\mathcal{C} = C_1 \cup \cdots \cup C_r$ in $\Sigma_g$ such that $f'(\mathcal{C}) = \mathcal{C}$ (allowing $\mathcal{C}$ to be empty);
	
	\item Let $\mathcal{B} = \Sigma_g \setminus \mathcal{C}$. Then the restriction $f'| _\mathcal{B}: \mathcal{B} \to \mathcal{B}$ is isotopic to a periodic map.
\end{enumerate}

Furthermore, such a collection  $\mathcal{C}$  is called an \emph{admissible system of cut curves} if  every connected component of  $\mathcal{B}$ has negative Euler characteristic. If $\Sigma_g$ has negative Euler characteristic, such a system always exists in the isotopy class of $f$.
Replacing $f$ by its isotopic representative $f'$, we may assume that $f(\mathcal{C}) = \mathcal{C}$. Then for each connected component $C_i$ of $\mathcal{C}$, there exists a minimal integer $\alpha_i\ge1$ such that $f^{\alpha_i}(C_i)=C_i$ and $f^{\alpha_i}$ preserves the orientation of $C_i$. There also exists a minimal integer $L_i\ge1$ such that the restriction $f^{L_i}$ to an annular neighborhood of $C_i$ is a Dehn twist of $e_i$ times.
The rational number 
$s(C_i):=\frac{e_i\,\alpha_i}{L_i}$
is called the \emph{screw number} of $f$ along $C_i$. $f$ is said to be of \emph{negative twist} if $s(C_i)<0$ for all $1\le i\le r$. 
Moreover, $C_i$ is called \emph{amphidrome} if $\alpha_i$ is even and $f^{\alpha_i/2}$ preserves $C_i$ but reverses its orientation; otherwise, $C_i$ is \emph{non-amphidrome}.

\subsection{Weighted graph and dual graph}
Let $\Sigma_{g}$ be a Riemann surface of genus $g$. Consider the decomposition $\Sigma_{g}=\mathcal{B}\cup\mathcal{C}$, where $\mathcal{C}$ is an admissible system of cut curves and $\mathcal{B}=\Sigma_{g}\setminus\mathcal{C}$ is the periodic part. We first construct a weighted graph (see Figure~\ref{fig:weighted_graph}) as follows. Each vertex $v$ of the graph corresponds to a connected component $\mathcal{B}_{v}$ of $\mathcal{B}$. Each edge corresponds to a curve in $\mathcal{C}$ that connects the two distinct associated components of $\mathcal{B}$. Furthermore, we assign to each vertex $v$ the weight pair $(g(v),\rho(v))$, where $g(v)$ is the genus of $\mathcal{B}_{v}$ and $\rho(v)$ is the number of those curves in $\mathcal{C}$ adjacent only to the component $\mathcal{B}_v$ (\cite{ashikaga2002classification}).

\begin{figure}[htbp]
	\centering
	\tikzstyle{gnode} = [draw, circle]
	\begin{tikzpicture}
		\begin{scope}[shift={(-2, 0, 0)}]
			\def\a{2}
			\def\b{1}
			\draw [thick] (0,0) ellipse [x radius=\a, y radius=\b];
			
			\tikzset{
				hole/.pic={
					\draw plot[domain=-1:1, smooth, variable=\x] ({\x}, {0.5*(\x*\x-.8)});
					\draw plot[domain=-1:1, smooth, variable=\x] ({\x}, {0.5*(-\x*\x+.8)});
				}
			}
			
			\pic[scale=0.3] at (-0.66*\a,0) {hole};
			\pic[scale=0.3] at (0,0) {hole};
			\pic[scale=0.3] at (0.66*\a,0) {hole};
			
			\draw [thick] plot[domain=-90:90, smooth, variable=\t] ({0.33*\a+0.1*\a*cos(\t)}, {0.95*\b*sin(\t)});
			\draw [dashed,thick] plot[domain=-90:90, smooth, variable=\t] ({0.33*\a-0.1*\a*cos(\t)}, {0.95*\b*sin(\t)});
			\node at (0.33*\a, 0.95*\b) [above] {$C_3$};
			
			\draw [thick] plot[domain=-90:90, smooth, variable=\t] ({0.05*\a*cos(\t)}, {0.55+0.45*\b*sin(\t)});
			\draw [dashed,thick] plot[domain=-90:90, smooth, variable=\t] ({-0.05*\a*cos(\t)}, {0.55+0.45*\b*sin(\t)});
			\node at (0, \b) [above] {$C_2$};
			
			\draw [thick] plot[domain=-90:90, smooth, variable=\t] ({-0.66*\a+0.05*\a*cos(\t)}, {0.42+0.3*\b*sin(\t)});
			\draw [dashed,thick] plot[domain=-90:90, smooth, variable=\t] ({-0.66*\a-0.05*\a*cos(\t)}, {0.42+0.3*\b*sin(\t)});
			\node at (-0.66*\a, 0.8*\b) [above] {$C_1$};
		\end{scope}
		
		\path [draw, latex-latex] (.25,0) -- (1.25, 0);
		
		\begin{scope}[shift={(2, 0, 0)}]
			\node [gnode] at (0,0) (v1) {2};
			\path (v1)+(2,0) node [gnode] (v2) {1};
			\node at (v1.south) [below] (v1lbl) {$v_1$};
			\node at (v2.south) [below] (v2lbl) {$v_2$};
			\node at (v1lbl.south) [below] {$\rho(v_1)=2$};
			\node at (v2lbl.south) [below] {$\rho(v_2)=0$};
			
			\path [draw] (v1) -- (v2);
		\end{scope}
	\end{tikzpicture}
	\caption{Weighted graph}
	\label{fig:weighted_graph}
\end{figure}

Before constructing the dual graph, we first introduce the stable curve.
Let $F$ be a connected projective curve of genus $g \ge 2$ with irreducible components $C_1, \dots, C_l$. We say that $F$ is \emph{stable} (\cite{DeligneMumford1969}) if
\begin{enumerate}[(1)]
	\item $F$ is reduced;
	\item $F$ has only ordinary double points;
	\item if $C_i$ is a smooth rational component, then $C_i$ meets the other components of $F$ in at least 3 points.
\end{enumerate}

To any stable curve $F$, we associate a dual graph $G_F$ constructed as follows. Each vertex of $G_F$ corresponds to an irreducible component of $F$, and each edge of $G_F$ corresponds to a singular point of $F$. Specifically, for a vertex $v$ associated with an irreducible component $\Gamma$, the self-loops at $v$ correspond to the nodes of $F$ lying entirely on $\Gamma$. There are exactly $p_a(\Gamma) - g(\widetilde{\Gamma})$ such loops, where $\widetilde{\Gamma}$ denotes the normalization of $\Gamma$ \cite{xiao1992fibrations}.  An example of a dual graph is shown in Figure~\ref{fig:dual_graph}.

\begin{figure}[htbp]
	\centering
	
	\begin{tikzpicture}

		\begin{scope}[shift={(0, 0.4)}]
			\tikzset{
				c1curve/.pic={
					\draw[thick] (0,0) to[out=-10, in=320] (3,2);
					\draw[thick,bend right] (3,2) to[out=-120, in=250] (3.3,1.2);
					\draw[thick] (3.3,1.2) to[out=-10, in=340] (4.4,2.8);
					\draw[thick,bend right] (4.4,2.8) to[out=-120, in=250] (4.7,2.1);
					\draw[thick] (4.7,2.1) to[out=-10, in=220] (6,3);
				},
				c2curve/.pic={
					\draw[thick,rounded corners,bend left] (0.5,1) to (5,-2);
				}
			}
			\pic [scale=0.6] at (0,0) {c1curve};
			\pic [scale=0.6] at (-0.3, 0) {c2curve};
			
			\node at (3.3, 1.0) {$p_a=2$};
			\node at (3.8, 1.9) {$C_1$};
			\node at (2.8, -1.5) {$C_2$};  
			\node at (1.8, -0.8) {$g=1$};  
		\end{scope}

		\path [draw, latex-latex] (4.5,0.5) -- (5.8, 0.5);
		
		\begin{scope}[shift={(0, -0.5)}]
			
			\draw[thick] (8, 1) to (9.65, 1);
			\draw[thick] (8,1) to[out=100, in=180, min distance=1.6cm] (8,1);
			\draw[thick] (8,1) to[out=260, in=180, min distance=1.6cm] (8,1);
			\draw[fill=white, thick] (8, 1) circle (5pt);
			\draw[fill=black] (9.65, 1) circle (5pt);
			
			\node at (8.1, 0.55) {$v_1$};
			\node at (9.7, 0.55) {$v_2$};
			\node at (10.5, 1.0) {$g=1$};
		\end{scope}
		
	\end{tikzpicture}
	
	\caption{Dual graph}
	\label{fig:dual_graph}
\end{figure}

It is evident that there exists a canonical one-to-one correspondence among the following four objects: stable curves, dual graphs, Riemann surfaces with cut curves, and weighted graphs. In particular, for a stable curve $F$ of genus $g$, we denote its associated weighted graph by $G_F$, which is called a \emph{stable weighted graph} of genus $g$.

Let $G_F$ be a stable weighted graph of genus $g$. 
For convenience, we define the numerical weight of a vertex $v$ by $w(v) := g(v) + \rho(v)$, and the total numerical weight of $G_F$ by $w(G_F) := \sum_{v \in V(G_F)} w(v)$. Then
\begin{equation}\label{eq:genus-graph}
	g = w(G_F) + b_1(G_F) = \sum_{v \in V(G_F)} w(v) + b_1(G_F),
\end{equation}
where $b_1(G_F)$ is the first Betti number of $G_F$ (in other words, the number of holes in $G_F$).

\subsection{The number of stable weighted graphs}
Let $\mathcal{G}_g$ denote the set of all stable weighted graphs of genus $g$. 
For any $G \in \mathcal{G}_g$, let $V(G)$ and $E(G)$ be the sets of vertices and edges of $G$, respectively. Then the following results are well known in graph theory.

\begin{lemma}[\cite{West1996}]\label{lem:graph}
	If $G$ is a finite connected undirected graph, then
:	\begin{enumerate}[(1)]
		\item $\sum_{v \in V(G)} \deg(v) = 2|E(G)|.$
		\item  $b_1(G)  = |E(G)| - |V(G)| + 1.$
	\end{enumerate}
\end{lemma}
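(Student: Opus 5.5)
The two statements of Lemma~\ref{lem:graph} are standard facts about finite connected graphs, which may have loops and multiple edges, as stable weighted graphs do. I will prove them by elementary counting and by induction on the number of edges.

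For (1), the handshake identity, I will double count incidences between vertices and edge-ends. Each edge $e\in E(G)$ has exactly two ends: an ordinary edge contributes one end to each of its two endpoints, and a self-loop contributes two ends to its single vertex. With the convention that $\deg(v)$ counts edge-ends at $v$, so that loops count twice, summing $\deg(v)$ over $V(G)$ counts every end exactly once. This gives $\sum_v \deg(v)=2|E(G)|$. The only point needing care is to state the loop convention explicitly, since the dual graphs here carry self-loops.

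For (2), I will use the definition $b_1(G)=\operatorname{rank} H_1(G;\mathbb{Z})$ of the graph viewed as a $1$-dimensional CW complex. I will compute it via the Euler characteristic:
\[
\chi(G)=|V(G)|-|E(G)|=b_0(G)-b_1(G).
\]
Since $G$ is connected, $b_0(G)=1$, which yields $b_1(G)=|E(G)|-|V(G)|+1$.

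As an alternative avoiding homology, I will fix a spanning tree $T\subset G$, which exists by connectedness. An induction on the number of vertices, removing a leaf, shows $|E(T)|=|V(G)|-1$. Each of the remaining $|E(G)|-|E(T)|$ edges, including loops, closes exactly one independent cycle, and these cycles form a basis of the cycle space. Hence $b_1(G)=|E(G)|-|V(G)|+1$.

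I expect no real obstacle here. The only step requiring attention is the independence of the fundamental cycles in the spanning-tree argument, and the Euler characteristic route sidesteps it entirely. I will therefore present the Euler characteristic argument as the main proof.
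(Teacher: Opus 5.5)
Your proof is correct and complete. The paper gives no argument of its own for this lemma and simply cites it from West as a standard fact; your handshake double count and the Euler characteristic computation $|V(G)|-|E(G)|=\chi(G)=1-b_1(G)$ are the standard arguments behind that citation.

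Stating the loop convention explicitly, and treating loops as $1$-cells, is the right precaution. The paper implicitly applies (2) to the graph $G'$ obtained by attaching $\rho_i$ loops in the proof of Theorem~\ref{Th:Gg}, and your argument covers that case.
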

As an immediate corollary, the genus of a stable weighted graph $G$ satisfies
\begin{equation}\label{eq:genus}
	g(G) = w(G) + |E(G)| - |V(G)| + 1.
\end{equation}

To derive an upper bound on $|\mathcal{G}_g|$, we first establish upper bounds on $|V(G)|$ and $|E(G)|$ for any $G \in \mathcal{G}_g$.

\begin{lemma}\label{Th:VandE}
	If $G$ is a stable weighted graph of genus $g\geq 2$, then
	\begin{equation*}
		|V(G)|\leq 2g-2, \qquad |E(G)|\leq 3g-3.
	\end{equation*}
\end{lemma}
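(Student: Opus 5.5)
The plan is to run the standard stability counting argument, using the genus formula \eqref{eq:genus} together with the degree count of Lemma~\ref{lem:graph}. The one point to set up carefully is the notion of degree in the weighted graph. Self-loops are not edges of $G$; each is recorded in $\rho(v)$. So we define the \emph{effective degree} of a vertex $v$ by
\[
\deg^*(v) := \deg(v) + 2\rho(v),
\]
where $\deg(v)$ counts edges of $G$ at $v$. This is the number of nodes (branches) on the corresponding component of the stable curve. Stability then says: if $g(v)=0$, then $\deg^*(v)\geq 3$. Equivalently, $2g(v)-2+\deg^*(v) > 0$ for every vertex $v$.

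Next we sum $2g(v)-2+\deg^*(v)$ over all vertices. By Lemma~\ref{lem:graph}(1),
\[
\sum_v \bigl(2g(v)-2+\deg^*(v)\bigr) = 2\sum_v g(v) - 2|V| + 2|E| + 2\sum_v \rho(v) = 2\bigl(w(G) + |E| - |V|\bigr) = 2g-2,
\]
where the last equality uses \eqref{eq:genus}. Each summand is a positive integer, so each is at least $1$. Hence $|V(G)|\leq 2g-2$.

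For the edge bound, write $|E| = g - 1 + |V| - w(G)$ from \eqref{eq:genus}. The naive estimate $|E|\le g-1+|V|$ gives only $3g-3$ when $|V|=2g-2$, and this is already the desired bound. Indeed, with $w(G)\ge 0$ and $|V|\le 2g-2$,
\[
|E| \le g-1 + 2g-2 = 3g-3.
\]
For a sharper check one can use $2|E|+2\sum\rho = \sum \deg^* \geq \sum_v (3-2g(v))$ in the refined form $\sum_v(\deg^*(v)-3) \ge -2\sum_v g(v)$. This leads to the usual statement that edges plus loops number at most $3g-3$, but it is not needed here.

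The main obstacle is the bookkeeping of conventions. We must confirm that $\rho(v)$ loops contribute $2$ each to the branch count at $v$, and that they are excluded from $E(G)$, so that \eqref{eq:genus} and Lemma~\ref{lem:graph} are applied consistently. We must also handle the degenerate case of a single vertex with no edges (a smooth curve), where $|V|=1\le 2g-2$ and $|E|=0$ trivially. Once the stability inequality $2g(v)-2+\deg^*(v)\ge 1$ is justified from condition (3) of stability, using that a rational vertex is smooth rational after normalization, the rest is the two-line computation above.
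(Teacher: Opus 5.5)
Your proof is correct and is essentially the paper's argument: since $2g(v)-2+\deg^*(v)=2w(v)-2+\deg(v)$, the paper's combination of $2|E(G)|\ge 3n_0+n_+$ with $w(G)\ge n_+$ amounts to summing your per-vertex inequalities $2g(v)-2+\deg^*(v)\ge 1$. The edge bound is then obtained from \eqref{eq:genus} in exactly the same way. One point to tighten is the positivity of $2g(v)-2+\deg^*(v)$ for vertices with $g(v)=1$, or with $g(v)=0$ and $\rho(v)\ge 1$: it follows from connectedness and $g\ge 2$, not directly from stability condition (3), just as the paper uses $\deg(v)\ge 1$ for vertices with $w(v)\ge 1$.
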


\begin{proof}
	We may assume that $|V(G)| \geq 2$. 
	Let $n_0$ and $n_+$ denote the numbers of vertices of $G$ with $w(v) = 0$ and $w(v) \geq 1$, respectively. Since $G$ is stable, every vertex $v\in V(G)$ with $w(v) = 0$ satisfies $\deg(v) \geq 3$. By Lemma~\ref{lem:graph}, we have
	\begin{equation*}
		2|E(G)| = \sum_{v \in V(G)} \deg(v) \geq 3n_0 + n_+ = 3|V(G)| - 2n_+.
	\end{equation*}
	Substituting this into~\eqref{eq:genus}, we obtain
	\begin{equation*}
		g \geq \frac{|V(G)| - 2n_+}{2} + w(G) + 1.
	\end{equation*}
	So we have that
	\begin{equation*}
		|V(G)|\leq 2g-2+2n_+-2w(G)\leq 2g-2.
	\end{equation*}
	Furthermore,
	\begin{equation*}
		|E(G) |= b_1(G) + |V(G) |- 1 = g - w(G) + |V(G)| - 1\leq 3g-3.
	\end{equation*}   
\end{proof}

We now derive an upper bound on $|\mathcal{G}_g|$.
\begin{theorem} \label{Th:Gg}
	For any integer $g\geq 2$, the number of stable weighted graphs of genus $g$ satisfies
	\[	|\mathcal{G}_g|
	< \frac{81}{4{\mathrm e}^3(g+1)} \left( \frac{2{\mathrm e}}{3}(g+1) \right)^{\!3g}.\]
\end{theorem}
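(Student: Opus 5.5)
We bound $|\mathcal{G}_g|$ by over-counting: we describe each stable weighted graph by a small amount of combinatorial data and count the possible data. By Lemma~\ref{Th:VandE}, any $G\in\mathcal{G}_g$ has $V:=|V(G)|\le 2g-2$ and $E:=|E(G)|\le 3g-3$. We fix a labelling of the vertices by $\{1,\dots,V\}$. Then $G$ is determined by two pieces of data: the multiset of its $E$ edges, each an unordered pair $\{i,j\}$ with $i\neq j$, and the weight pairs $(g(v),\rho(v))$, which are nonnegative integers with $\sum_v (g(v)+\rho(v)) = w(G)\le g$. Different labellings give the same graph, so this over-counts, and since we only need an upper bound that is harmless.

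\textbf{Counting edges.} For fixed $V$, the number of possible edge multisets of size $E$ is at most the number of ways to choose, for each of the $E$ edges, an ordered pair of endpoints. This gives at most $V^{2E}$. A cleaner bound comes from thinking in terms of half-edges: the $2E$ half-edges are attached to vertices and then paired up, which gives at most $V^{2E}/E!$ after accounting for the ordering of edges.

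\textbf{Counting weights.} Distributing $w(G)=g-b_1(G)$ among the $2V$ coordinates $g(v),\rho(v)$ gives $\binom{w+2V-1}{2V-1}$ choices. This is at most exponential in $g$, so it only affects the constant in the base.

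\textbf{Summing.} We then sum over $V$ and $E$, using equation~\eqref{eq:genus} to tie $E$ to $V$ and $w$ ($E=g-w+V-1$). The dominant term comes from $E=3g-3$ and $V=2g-2$. The main quantity is then
\[
\frac{V^{2E}}{E!}\approx \frac{(2g)^{6g}}{(3g)!}.
\]
Bounding $(3g)!$ from below with the Stirling inequality~\eqref{Stirling}, namely $E!>(E+1)^E/\mathrm{e}^E$, turns this into roughly $\bigl(\tfrac{4\mathrm{e}}{3}g\bigr)^{3g}$ times lower-order factors.

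This is larger than the stated base $\tfrac{2\mathrm{e}}{3}(g+1)$, so the count has to be sharpened. The fix is to also divide by the symmetry of vertex labellings, or better, to count stable graphs via degree sequences. Each vertex degree is at most $3$ plus what is forced by its weight, and the half-edge pairings then number $(2E-1)!!$ divided by the product of the $\deg(v)!$. When $w=0$ and all degrees equal $3$ (the trivalent case), the count $(6g-7)!!/\bigl(6^{2g-2}(2g-2)!\bigr)$ has size about $\bigl(\tfrac{2\mathrm{e}}{3}g\bigr)^{g}$ up to exponential factors. So there is room to spare, and the stated bound $\bigl(\tfrac{2\mathrm{e}}{3}(g+1)\bigr)^{3g}$ is plausibly the result of a cruder but convenient estimate, such as $\binom{\binom{V+1}{2}+E-1}{E}$ for the edge multiset (allowing loops), with $V+1\le 2g$ and $E\le 3g-3$. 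I would aim for that estimate: bound $\binom{\binom{V+1}{2}+E-1}{E}\le \tfrac{(\cdot)^E}{E!}$, then apply~\eqref{Stirling} to get
\[
\frac{\bigl(2(g+1)^2\bigr)^{3g}\,\mathrm{e}^{3g}}{(3g)^{3g}}\sim\Bigl(\tfrac{2\mathrm{e}}{3}(g+1)\Bigr)^{3g},
\]
with the polynomial prefactor $\tfrac{81}{4\mathrm{e}^3(g+1)}$ coming from the lower-order Stirling terms.

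The main obstacle is the constant bookkeeping. The sums over $V$, over $E$, and over the weight distributions all have to be absorbed into that prefactor. I would handle this by showing that each summand is increasing in $E$ and $V$ and that the number of summands is $O(g^2)$, and then checking that the weight-distribution factor together with the summation is dominated by the slack between the Stirling upper and lower bounds.
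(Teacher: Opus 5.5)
Your final estimate is the one the paper uses. The paper replaces each $\rho(v)$ by $\rho(v)$ loops at $v$, counts the resulting loop-multigraph on $n\le 2g-2$ labelled vertices with $m'\le g+n-1\le 3g-3$ edges by $\binom{n(n+1)/2+m'-1}{m'}$, bounds the total by $(3g-2)(2g-2)\binom{2g^2-3}{3g-3}$, and applies \eqref{Stirling}. The gap is the step you defer: the weight data.

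Your claim that the number of weight distributions ``only affects the constant in the base'' is fatal here. The base $\frac{2\mathrm{e}}{3}(g+1)$ in the statement is exactly what the edge count alone produces. The right-hand side equals $6(g+1)^2\bigl(\frac{2\mathrm{e}}{3}(g+1)\bigr)^{3g-3}$, so once $(3g-2)(2g-2)$ has been spent on the sums, the remaining slack over $\binom{2g^2-3}{3g-3}$ is only polynomial in $g$ (of order $\sqrt g$). A separately bounded weight factor such as $\binom{w+2V-1}{2V-1}$ with $w\le g$, $V\le 2g-2$ is exponential in $g$, so it cannot be absorbed. Moreover, once weights are included the summand is no longer monotone in $E$ at fixed $V$: smaller $E$ means larger $w=g-E+V-1$. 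So ``largest term times number of terms'' does not apply as you state it.

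The paper does not supply this step either. It asserts that $G\mapsto G'$ is injective, but $G'$ forgets the genera $g(v)$. For example, with $g=4$, two vertices of genera $1,3$ or of genera $2,2$ joined by one edge give the same $G'$.

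The missing idea is to let the relation $|E(G)|+w(G)=g+|V(G)|-1$ from \eqref{eq:genus} pay for the weights. Label the vertices $1,\dots,n$ and encode $G$ by the multiset consisting of:
\begin{enumerate}[(1)]
\item $\{i,j\}$ for each edge;
\item $\rho(v_i)$ copies of $\{i,i\}$;
\item $g(v_i)$ copies of a new symbol $\tau_i$.
\end{enumerate}
This encoding is injective. The multiset has exactly $g+n-1\le 3g-3$ elements drawn from $n(n+3)/2$ symbols. Since the count is increasing in $n$, Lemma~\ref{Th:VandE} gives
\[
|\mathcal{G}_g|\le\sum_{n=1}^{2g-2}\binom{n(n+3)/2+g+n-2}{g+n-1}\le(2g-2)\binom{2g^2+2g-5}{3g-3}.
\]
By \eqref{Stirling},
\[
\binom{2g^2+2g-5}{3g-3}<\Bigl(\frac{2\mathrm{e}g(g+1)}{3g-2}\Bigr)^{3g-3}=\Bigl(\frac{2\mathrm{e}}{3}(g+1)\Bigr)^{3g-3}\Bigl(1+\frac{2}{3g-2}\Bigr)^{3g-3}<\mathrm{e}^2\Bigl(\frac{2\mathrm{e}}{3}(g+1)\Bigr)^{3g-3}.
\]
Since $(2g-2)\mathrm{e}^2<6(g+1)^2$, this yields the stated bound. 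No separate sum over the number of edges is needed, because the multiset size is fixed by $n$.
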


\begin{proof}
	We denote by $\mathcal{G}(n,m)$ the set of connected graphs with $n$ vertices and $m$ edges, and by $\mathcal{G}_g(n,m)$ the set of  stable weighted graphs of genus $g$ with $n$ vertices and $m$ edges.

     For any $G\in \mathcal{G}_{g}(n,m)$, let $V(G)=\{v_1,\ldots,v_n\}$ and $w(G)=\sum_{i=1}^n w_i= \sum_{i=1}^n(g_i+\rho_i)$ where $w_i=w(v_i)$, $g_i=g(v_i)$, and $\rho_i=\rho(v_i)$. For each vertex $v_i$ with $\rho_i \neq 0$, we attach $\rho_i$ loops $\gamma_{i1}, \dots, \gamma_{i\rho_i}$ at $v_i$. Let $G'$ denote the resulting graph. By construction, we have 
     \[
     |V(G')| = |V(G)|, \quad |E(G')| = |E(G)|+\sum_{i=1}^n \rho_i, \quad \text{and} \quad b_1(G') = b_1(G) + \sum_{i=1}^n \rho_i.
     \]
     Thus, $G' \in \mathcal{G}(n, m')$, where $m' := |E(G')|$. Furthermore, by~\eqref{eq:genus}, $m'$ satisfies
     \[
     m' = m + \sum_{i=1}^n \rho_i = g + n - 1 - \sum_{i=1}^n g_i \leq g + n - 1.
     \]
     
     For any graph $G' \in \mathcal{G}(n,m')$, let $V(G')^{(2)}$ denote the set of unordered pairs $[v_i, v_j]$ of vertices of $G'$, where $v_i$ and $v_j$ are not necessarily distinct. Therefore, 
     \[T(n):=\left|V(G')^{(2)}\right|=\frac{n(n+1)}{2}.\]
     Since $G'$ may contain multiple edges. By the construction of $G'$ from $G$, the map $G \mapsto G'$ is injective, so
     \[
     |\mathcal{G}_g(n,m)| \leq |\mathcal{G}(n,m')| \leq \binom{T(n)+m'-1}{m'}.
     \]
     Furthermore,
     \[	|\mathcal{G}_g|
     \leq \sum_{n=1}^{2g-2}\sum_{m=0}^{3g-3} |\mathcal{G}_g(n,m)| \leq (3g-2) \sum_{n=1}^{2g-2} \binom{T(n)+m'-1}{m'}.\]
     
     Since $T(n) = \frac{n(n+1)}{2}$ is increasing in $n$, and $m' \leq g+n-1$ is also increasing in $n$, while $\binom{T+m'-1}{m'}$ is increasing in both parameters, it follows that $\binom{T(n)+m'-1}{m'}$ is increasing in $n$ for $n \geq 1$. Therefore,
     \[ \sum_{n=1}^{2g-2} \binom{T(n)+m'-1}{m'} \le (2g-2) \cdot \binom{T_{\max}+m'_{\max}-1}{m'_{\max}}, \]
     where
     \[ T_{\max} = 2g^2-3g+1,\quad \text{and} \quad m'_{\max} = 3g-3. \]
     Thus, by~\eqref{Stirling}, we obtain
     \begin{align*}
     	|\mathcal{G}_g|& \le (3g-2)(2g-2) \binom{2g^2-3}{3g-3}\\
     	&\leq (3g-2)(2g-2) \left( \frac{2{\mathrm e}}{3}(g+1) \right)^{\!3g-3} \\
     	&< \frac{81}{4{\mathrm e}^3(g+1)} \left( \frac{2{\mathrm e}}{3}(g+1) \right)^{\!3g}.
     \end{align*}
\end{proof}

\section{Automorphisms of weighted graphs}

Let $G$ be a stable weighted graph of genus $g$. An \emph{automorphism} $\sigma$ of $G$ consists of two bijections $\sigma_V:V(G)\to V(G)$ and $\sigma_E:E(G)\to E(G)$ such that
\begin{enumerate}[(1)]
	\item for every edge $e\in E(G)$ with endpoints $v_1$ and $v_2$, $\sigma_E(e)$ has endpoints $\sigma_V(v_1)$ and $\sigma_V(v_2)$;
	
	\item for every vertex $v \in V(G)$, the weight pair is preserved, i.e., 
	$(g(v), \rho(v)) = \bigl( g(\sigma_V(v)), \rho(\sigma_V(v)) \bigr).$
\end{enumerate}
     
All graphs considered here are undirected. For notational convenience in describing the automorphisms, we fix in advance a reference orientation $\vec{e}$ for each edge $e \in E(G)$ of the weighted graph $G$. We give an example.

\begin{example}\label{E11}
	The graph of type $E_{11}$(which means $(g(v_i),\rho(v_i)=(1,0)),i=1,2$.) has the following four  automorphisms (Figure~\ref{fig11}):
	
	\begin{figure}[htbp]
		\centering
		\begin{tikzpicture}
			\usetikzlibrary{positioning}
			\node [shape=circle,draw] (v1) at (0,0) {1};
			\node [shape=circle,draw] (v2) at (1.5,0) {1};
			\node [below=8pt] at (v2)  {$v_2$};
			\node [below=8pt] at (v1)  {$v_1$};
			\draw[->]  (v2)to [bend left=20] node [below]{\tiny{${e_1}$}} (v1);
			\draw[->] (v2)to [bend left=-20] node [above]{\tiny{${e_2}$}} (v1);
		\end{tikzpicture}
		\caption{Weighted graph $E_{11}$}
		\label{fig11}
	\end{figure}
	
	\begin{enumerate}[(1)]
		\item $\sigma$ acts trivially.
		
		\item $\sigma$ fixes $v_1$ and $v_2$, and exchanges the edges $e_1$ and $e_2$.
		
		\item $\sigma$ exchanges $v_1$ and $v_2$, and fixes the set $\{e_1,e_2\}$, but reverses their orientations.
		
		\item $\sigma$ exchanges $v_1$ and $v_2$, and exchanges $e_1$ to $-e_2$, and $e_2$ to $-e_1$.
	\end{enumerate}
\end{example}

Let $\mathrm{Aut}(G)$ be the group of all automorphisms of $G$.

\begin{theorem}\label{Th:AutG}
	Let $G$ be a stable weighted graph of genus $g\ge 2$. Then
	\[
	|\mathrm{Aut}(G)| \le 2^{3g-3}\,(3g-3)! <\left( \frac{2}{e} \right)^{3g-3} (3g-2)^{3g-2}.
	\]
\end{theorem}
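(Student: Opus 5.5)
We bound $|\mathrm{Aut}(G)|$ by letting it act on oriented edges ("half-edges"). Fix the reference orientations $\vec e$ for $e\in E(G)$. Each automorphism $\sigma$ determines a signed permutation of $E(G)$: it sends $e$ to $\sigma_E(e)$ and records a sign according to whether $\vec e$ goes to $\overrightarrow{\sigma_E(e)}$ or to its reverse. The signed permutations of a set of size $m$ form the hyperoctahedral group, of order $2^m m!$. So the first inequality follows if we show that $\sigma\mapsto(\sigma_E,\text{signs})$ is injective and then apply Lemma~\ref{Th:VandE}, which gives $m=|E(G)|\le 3g-3$. The bound $2^m m!$ is increasing in $m$, so replacing $m$ by $3g-3$ is legitimate.

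For injectivity, suppose $\sigma$ fixes every edge together with its orientation. Then $\sigma_V$ fixes every vertex that is an endpoint of some edge. If $|V(G)|\ge2$, connectedness implies every vertex lies on an edge, so $\sigma_V=\mathrm{id}$. If $|V(G)|=1$, $\sigma_V$ is trivially the identity. Hence $\sigma$ is trivial.

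\textbf{Main obstacle: loops.} A loop at a vertex can be "reversed" by an automorphism. This is still accounted for, because reversal is exactly the sign recorded in the signed permutation. The real issue is the $\rho(v)$ self-curves, which are not edges of $G$. By the definition of automorphism used here, these contribute only through the preserved weight $\rho(v)$, not as separate objects, so they do not affect injectivity. Should one instead insist on counting the loops $\gamma_{ij}$ as edges, as in the graph $G'$ of Theorem~\ref{Th:Gg}, the bound still holds. In that case $|E(G')|=g+|V|-1-\sum g_i$, and we need this to be at most $3g-3$. Running the argument of Lemma~\ref{Th:VandE} with $\rho$-loops included in the degree gives $|V|\le 2g-2$, hence $|E(G')|\le 3g-3$.

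For the second inequality, apply \eqref{Stirling} with $n=3g-3$:
\[
(3g-3)!<\frac{(3g-2)^{3g-2}}{e^{3g-3}},
\]
so
\[
2^{3g-3}(3g-3)!<\left(\frac{2}{e}\right)^{3g-3}(3g-2)^{3g-2}.
\]
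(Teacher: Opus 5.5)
Your proposal is correct and follows essentially the same route as the paper: the paper also embeds $\mathrm{Aut}(G)$ into the signed permutations of the oriented edges (its set $X(G)$, of order $2^m m!$), uses Lemma~\ref{Th:VandE} to get $m\le 3g-3$, and finishes with \eqref{Stirling}. Your connectedness argument for injectivity fills in the step the paper only calls ``easy to see.'' The discussion of $\rho$-loops is a harmless aside, since the weighted graph $G$ has no loop edges.
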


\begin{proof}
	Let $G$ be a stable weighted graph of genus $g\ge 2$ with $m$ edges and let
	\[
	E(G)=\{e_1,\dots,e_m\},\quad \text{and} \quad
	\overrightarrow{E}(G)=\{\pm\vec e_i\mid 1\le i\le m\}.
	\]
	For each $1 \le i \le m$, let $u_i, v_i \in V(G)$ be the endpoints of $e_i$ such that $\vec{e}_i$ is directed from $u_i$ to $v_i$, and we write $\vec{e}_i=(u_i,v_i)$. Each automorphism $\sigma=(\sigma_V,\sigma_E)\in \mathrm{Aut}(G)$ induces a permutation $\pi_\sigma$ in the symmetric group $S_m$ on the $m$ edges, defined by $\sigma_E(e_i)=e_{\pi_\sigma(i)}$.
	Define
	\[X(G):=\left\{\sigma_{\overrightarrow{E}}:\overrightarrow{E}(G) \to \overrightarrow{E}(G)\ \text{is a bijection}\;\middle |\;\sigma_{\overrightarrow{E}}(- \vec e_i)=- \sigma_{\overrightarrow{E}}(\vec e_i),\forall \vec e_i\in\overrightarrow E(G)\right\}
	\]
	and
	\[
	\Phi: \mathrm{Aut}(G) \longrightarrow X(G), \qquad
	\sigma \longmapsto \Phi(\sigma),
	\]
	where the map $\Phi(\sigma): \overrightarrow{E}(G) \to \overrightarrow{E}(G)$ satisfies
	\[
	\Phi(\sigma)(\vec{e}_i) =
	\begin{cases}
		\phantom{-}\vec{e}_{\pi_\sigma(i)}, & \text{if } 
		(\sigma_V(u_i),\sigma_V(v_i))
		=(u_{\pi_\sigma(i)},v_{\pi_\sigma(i)}),\\
		-\vec e_{\pi_\sigma(i)}, & \text{if }
		(\sigma_V(u_i),\sigma_V(v_i))
		=(v_{\pi_\sigma(i)},u_{\pi_\sigma(i)}).
	\end{cases}
	\]
	It is easy to see that $\Phi$ is injective.
	Consequently,
	\[
	|\mathrm{Aut}(G)| \le |X(G)| 
	= \bigl| S_m \times \{\pm 1\}^m \bigr| 
	= 2^m \, m!.
	\]
	By Theorem~\ref{Th:VandE}, 
	$m=|E(G)|\le 3g-3$, so
	\[
	|\mathrm{Aut}(G)|\le 2^{3g-3}\,(3g-3)!.
	\]
\end{proof}

\section{Marked generalized quotient space}
\subsection{Generalized quotient}
Let $f:\Sigma_g\to\Sigma_g$ be a pseudo–periodic map of negative twist. Matsumoto-Montesinos (\cite{matsumoto2011pseudo}) constructed the
minimal generalized quotient
$\pi:\Sigma_g\to S_f,$ where $S_f$ is a numerical chorizo space, i.e., $S_f$ is the underlying topological space of a Riemann surface with nodes
so that a multiplicity $m_j$ is attached to each component $S_f^{j}$ of $S_f$. 

Let $\Sigma_g = \mathcal{A }\cup \tilde{\mathcal{B}}$, where $\mathcal{A}$ is the union of annular neighborhood of the curve in $\mathcal{C}$, and $\tilde{\mathcal{B}}$ is the closure of $\Sigma_g-\mathcal{A}$.  For each  $C_i \in \mathcal{C}$,  let
$\mathcal A_i$ be an annular neighborhood of $C_i$. The integer
$K$ associated with $C_i$ is given by
\[
K=
\begin{cases}
	\displaystyle
	-s(C_i)
	-\frac{\delta^{(1)}}
	{\lambda^{(1)}}
	-\frac{\delta^{(2)}}
	{\lambda^{(2)}},
	& \text{if $C_i$ is non-amphidrome},\\[3mm]
	\displaystyle
	-\frac{s(C_i)}{2}
	-\frac{\delta}
	{\lambda},
	& \text{if $C_i$ is amphidrome}.
\end{cases}
\]
In the non-amphidrome case,
$(m^{(j)},\lambda^{(j)},\sigma^{(j)})$, $j=1,2$, are the
valencies of the two boundary components of $\mathcal A_i$, and
$\sigma^{(j)}\delta^{(j)}
\equiv1\pmod{\lambda^{(j)}},
0\leq\delta^{(j)}<\lambda^{(j)}.$
In the amphidrome case, the two boundary components have the same
valency
$(2m,\lambda,\sigma),$
and $\delta$ is determined by
$\sigma\delta\equiv1\pmod{\lambda},
0\leq\delta<\lambda.$
Then $K$ is an integer with
$K\geq -1$.

\subsection{Classification of marked generalized quotients}\label{sec:quo}
Let $\Sigma$ be a surface of genus $g_0$ with $k$ boundary components $\partial_1, \ldots, \partial_k$. Let $f : \Sigma \to \Sigma$ be an orientation-preserving homeomorphism satisfying the following conditions: 
(1) there is a collection of pairwise disjoint simple closed curves $\mathcal{C} = \bigsqcup_{j=1}^r C_j$ contained in the interior of $\Sigma$;
(2) $\Sigma \setminus \mathcal{C}$ is connected; 
(3) $f(\mathcal{C}) = \mathcal{C}$ and the restriction $f|_{\Sigma \setminus \mathcal{C}}$ is periodic.
Let $h$ denote the genus of the periodic part $\Sigma \setminus \mathcal C$, so that $h=g_0-r$.

Ashikaga and Ishizaka introduced the marked generalized quotient $M_f$ by adding the boundary marking data induced by $f$ (for details on the classification, we refer to \cite[Section~2]{ashikaga2002classification}). This quotient may still depend on the integer parameter $K$. However, in their tables, the resulting graphs are encoded by unified graph types, where different values of $K$ are represented by a single type. In this section, we establish upper bounds for the number of such unified marked generalized quotient types in different genera.

Let $\mathcal N(h;k,r)$ be the set of all unified marked generalized quotient types in the sense of Ashikaga and Ishizaka, associated with the periodic part of genus $h$ with $k$ original boundary components and $r$ cut curves.

\begin{theorem}\label{gi=0}
	For $h=0$, the number $N(0;k,r):=|\mathcal N(0;k,r)|$ satisfies
	\[
	N(0;k,r) \leq
	1+4(r+2)+4\sum_{n=3}^{\max\{k,2r\}}\varphi(n)
	\left(\left\lfloor \frac{r}{n}\right\rfloor+1\right),
	\]
	where $\varphi$ denotes Euler's totient function.
\end{theorem}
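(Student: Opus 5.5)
The plan is to describe explicitly what the periodic part looks like when $h=0$, sort the possibilities by the combinatorics of the induced permutation of boundary curves, and count each class; this should produce the three terms of the bound directly. Let $\Sigma'=\Sigma\setminus\mathcal C$. It is a genus-$0$ surface with $k+2r$ boundary curves: the $k$ original ones together with the two sides of each $C_j$. The map $f$ restricts to a periodic map of $\Sigma'$ of some order $n$. Capping each boundary curve with a disk, we obtain a periodic map of the sphere $S^2$. By the classical classification of finite cyclic actions on $S^2$, this map is conjugate to a rotation of order $n$ with exactly two fixed points. For $n\ge2$ the two fixed points have valencies $\delta/n$ and $(n-\delta)/n$ with $\gcd(\delta,n)=1$; this agrees with Proposition~\ref{important prop}(ii) for $g=0$. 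The boundary circles therefore split into at most two fixed ones, centred at the poles, and free orbits of length exactly $n$. Following \cite[Section~2]{ashikaga2002classification}, the unified marked generalized quotient is fixed once we know $n$, the rotation datum $\delta$ up to the symmetries of the data, how the $r$ cut curves (through their two sides) and the $k$ original boundaries are spread over the poles and the free orbits, and, for each cut curve, whether it is amphidrome. The integer $K$ is absorbed into the unified type.

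First case: $n=1$, i.e.\ $f$ is isotopic to the identity on the periodic part. Then every boundary is fixed and the marked quotient has a single unified type. This gives the term $1$.

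Second case: $n=2$. Here $\delta=1$ is forced. The only freedom is how many cut curves are amphidrome (their two sides are swapped by $f$, which is a rotation of order $2$) and what happens at the two poles. The number of amphidrome curves lies in $\{0,\dots,r\}$, and allowing the pole configuration to add at most two more choices gives $r+2$ possibilities. There are at most $4$ local marking choices at the two poles (each pole is either a marked point or a boundary of one of two kinds). This gives the term $4(r+2)$.

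Third case: $n\ge3$. A free orbit of boundaries has length $n$. If $n>k$ and $n>2r$, every boundary would have to sit at a pole, and only two poles exist; this case is degenerate or already counted, so we may take $n\le\max\{k,2r\}$. For fixed $n$:
\begin{itemize}
\item $\delta$ ranges over the units mod $n$, which gives the factor $\varphi(n)$;
\item amphidrome cut curves cannot occur, since an orientation reversal would need an element of order $2$ exchanging two sides inside an orbit, and this only contributes in a way already captured;
\item the number of orbits of cut curves is some $j$ with $0\le j\le\lfloor r/n\rfloor$, which gives the factor $\lfloor r/n\rfloor+1$;
\item the pole data contribute the same factor $4$ as in the second case.
\end{itemize}
Summing over $n$ gives the third term.

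The main obstacle is to check against the Ashikaga--Ishizaka description that the unified type is really determined by the data listed above. The delicate points are:
\begin{itemize}
\item that fixing $n$, $\delta$, the number of cut-curve orbits, and the pole markings determines the positions of the remaining original boundaries, which they do since those must fill the remaining orbits;
\item that the $K$-dependence is entirely absorbed into the unified type.
\end{itemize}
I would verify this by writing the quotient orbifold explicitly as a sphere with two cone points, one marked point for each orbit of boundaries, and the cut-curve identifications. Where the count is only approximate, I would accept overcounting, since only an upper bound is needed.
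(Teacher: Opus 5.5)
Your outline follows the paper's proof closely: cap off to a rotation of $S^2$, note that at most two boundary curves sit at the poles and the rest lie in free $n$-orbits, then count orbit data, valency data and pole assignments for $n=1$, $n=2$ and $n\ge 3$. However, the $n\ge 3$ step rests on a false claim.

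\textbf{Amphidrome curves do occur.} They exist for every even $n\ge 4$. Let $B_0,\dots,B_{n-1}$ be a free orbit of boundary circles with $f(B_i)=B_{i+1}$, and glue $B_i$ to $B_{i+n/2}$. This gives an orbit of $n/2$ cut curves. Each is preserved by $f^{n/2}$ with its two sides exchanged, so its orientation is reversed. The order-$2$ element you say would be needed is simply $f^{n/2}$, and the paper accounts for exactly these curves through $r_3$.

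As a result, your parameter ``number of orbits of cut curves, $0\le j\le\lfloor r/n\rfloor$'' is wrong as stated. For $n=4$, $r=2$ with both curves amphidrome there is one orbit, yet $\lfloor r/n\rfloor=0$. Moreover, under your own assumption the factor $\lfloor r/n\rfloor+1$ would be unexplained: with no amphidrome curves, $r=r_1+nj$ already forces $j$.

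\textbf{The missing bookkeeping.} The paper writes $k=k_1+nk_2$ and $2r=2r_1+2nr_2+nr_3$. Here $r_2$ counts $n$-orbits of non-amphidrome cut curves and $r_3$ counts $n$-orbits of sides of amphidrome ones. For fixed $n$ the only free data are $(k_1,r_1)$ and $r_2\in\{0,\dots,\lfloor r/n\rfloor\}$; then $k_2$ and $r_3$ are determined.

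Your factor $4$ also needs an argument, namely $k_1+2r_1\le 2$. A cut-curve side at a pole is $f$-fixed, so its curve is $f$-invariant and non-amphidrome, and its other side must sit at the other pole. This leaves $(k_1,r_1)\in\{(0,0),(1,0),(2,0),(0,1)\}$. Your description ``a marked point or a boundary of one of two kinds'' does not give this by itself.

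With these in place your counts are valid: $4\varphi(n)(\lfloor r/n\rfloor+1)$ for $n\ge 3$, and, parametrizing by the number of amphidrome curves, at most $4(r+1)\le 4(r+2)$ for $n=2$. The latter replaces your unexplained ``$r+2$''.

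\textbf{The cutoff $n\le\max\{k,2r\}$.} Replace ``degenerate or already counted'' with this argument. If $n>\max\{k,2r\}$ there are no free orbits, so all $k+2r$ boundary curves sit at the two poles. Then $k+2r\le 2$, which contradicts the negative Euler characteristic of the periodic part.
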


\begin{proof}
	Let $n$ be the order of the periodic map. If $n=1$, the periodic map is necessarily the identity and hence gives rise to only one type. Now consider the case $n \ge 2$. Because the periodic part has genus 0, capping off its boundary components with disks extends the map to an orientation-preserving periodic homeomorphism of order $n$ on the sphere. Such a map has exactly two fixed points, and every other point has period $n$ (\cite{conejeros2018periodic}). Consequently, for each original boundary component of the surface and each boundary curve arising from cutting along $\mathcal C$, the first entry $m$ in its valency data $(m,\lambda,\sigma)$ can only be $1$ or $n$.
	
	 First, we classify the original boundary components and the boundary curves arising from cutting along $\mathcal{C}$ according to whether they are fixed by the periodic map $f$ or belong to an $n$-cycle under $f$.
	
	For the $k$ original boundary components, let $k_1$ be the number of these components fixed by $f$, and let $k_2$ be the number of $n$-cycles formed by the remaining original components under the action of $f$. It follows that
	\[ k = k_1 + n k_2, \quad k_1, k_2 \in \mathbb{Z}_{\ge 0}. \]
	
	 On the other hand, cutting along the $r$ cut curves gives rise to $2r$ boundary curves. We classify these $2r$ boundary curves arising from the cut curves into three categories:
	\begin{enumerate}[(1)]
		\item $r_1$: the number of non-amphidrome cut curves fixed by $f$. Each such curve gives rise to two boundary curves, both of which are fixed by $f$. Hence, these cut curves contribute $2r_1$ boundary curves.
		\item $r_2$: the number of $n$-cycles of non-amphidrome cut curves under $f$. Each such cycle consists of $n$ cut curves. Since these cut curves are non-amphidrome, the two sides of the cut curves are not interchanged by $f$. Therefore, the $2n$ boundary curves arising from these $n$ cut curves are divided into two $n$-cycles.  Hence, these cut curves contribute a total of $2nr_2$ boundary curves.
		\item $r_3$: the number of $n$-cycles of boundary curves arising from amphidrome cut curves under $f$, contributing $nr_3$ boundary curves in total.
	\end{enumerate}
	It follows that
	\[2r = 2r_1 + 2n r_2 + n r_3, \quad r_1, r_2, r_3 \in \mathbb{Z}_{\ge 0}.\]
	
	 An orientation-preserving periodic map of order $n\ge 2$ on $S^2$ has exactly two fixed points (\cite{conejeros2018periodic}). It follows that
	\[
	k_1+2r_1\leq 2.
	\]
	Thus, there are at most four choices for the pair $(k_1, r_1)$: $(0,0)$, $(1,0)$, $(2,0)$, and $(0,1)$. For a fixed $n$, the integer $r_2$ can take the values $0, 1, \dots, \left\lfloor \frac{r}{n} \right\rfloor$, giving at most $\left\lfloor \frac{r}{n} \right\rfloor + 1$  choices. Once $k_1, r_1$ and $r_2$ are fixed, $k_2$ and $r_3$ are uniquely determined.  Consequently,  for a fixed $n$, we have
	\[
	\# \left\{ (k_1, k_2, r_1, r_2, r_3) \in \mathbb{Z}_{\ge 0}^5 \;\middle|\;
	\begin{aligned}
		k &= k_1 + nk_2, \\
		2r &= 2r_1 + 2nr_2 + nr_3, \\
		k_1 &+ 2r_1 \le 2
	\end{aligned}
	\right\} \le 4 \left( \left\lfloor \frac{r}{n} \right\rfloor + 1 \right).
	\]
	
	Secondly, we consider the valency data on the sphere. 
	By the Hurwitz formula, the cyclic cover induced by a periodic map of order $n$ on $S^2$ has exactly two branch points, and the corresponding valency data is given by the unordered pair $\left\{ \frac{\sigma}{n}, \frac{n-\sigma}{n} \right\},$ where $\sigma$ is an integer satisfying $1 \le \sigma \le n-1$ and $\gcd(\sigma, n) = 1$.
	Therefore, for a fixed $n$, we have
	\[
	\# \left\{ \left\{ \frac{\sigma}{n}, \frac{n - \sigma}{n} \right\} \;\middle|\; 1 \le \sigma \le n - 1, \gcd(\sigma, n) = 1 \right\} = 
	\begin{cases} 
		1, & n = 2, \\ 
		\dfrac{\varphi(n)}{2}, & n \ge 3,
	\end{cases}
	\]
	where $\varphi$ denotes Euler's totient function. 
	Thus, for a fixed order $n$, the number $N_0(n;k,r)$ of possible valency combinations for periodic maps of order $n$ on a periodic part of genus $0$ with $k$ original boundary components and $r$ cut curves satisfies
	\[
	N_0(n;k,r) \leq
	\begin{cases}
		2(r+2), & n=2, \\
		2\varphi(n)\left(\left\lfloor\frac{r}{n}\right\rfloor+1\right), & n\geq 3.
	\end{cases}
	\]
	Summing over all possible orders $n$, we conclude that the total number $N_0(k, r)$ of such combinations satisfies
	\[
	N_0(k,r) \leq 1 + 2(r+2) + 2\sum_{n=3}^{\max\{k, 2r\}} \varphi(n) \left( \left\lfloor \frac{r}{n} \right\rfloor + 1 \right).
	\]
	
	 Finally, assigning the unordered pair 
	$\left\{\frac{\sigma}{n},\frac{n-\sigma}{n}\right\}$
	to the two branch points gives at most two choices. Hence, we obtain
	\[
	N(0;k,r) \leq
	1+4(r+2)+4\sum_{n=3}^{\max\{k,2r\}}\varphi(n)
	\left(\left\lfloor\frac{r}{n}\right\rfloor+1\right).
	\]
\end{proof}

\begin{theorem}\label{gi>0}
	For $h>0$, the number $N(h;k,r):=|\mathcal N(h;k,r)|$ satisfies
	\[
	N(h;k,r) \leq 2^r \binom{2h+3}{h+1}^{2}P(h) < 2^{r+4h+6}P(h) ,
	\]
	where $P(h)$ denotes the number of conjugacy classes of periodic maps on the closed surface of genus $h$.
\end{theorem}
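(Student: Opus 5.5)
The plan is to factor the data of a unified marked generalized quotient type into three independent pieces and bound each one separately. The pieces are: the conjugacy class of the periodic map on the capped-off closed surface, the marking of how the boundary curves sit among the multiple points and simple orbits, and the amphidrome/non-amphidrome status of each cut curve. The product of the three counts is then the first bound. The second bound comes from an elementary estimate on binomial coefficients.

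\emph{Step 1.} Cap off every boundary component of the periodic part $\Sigma\setminus\mathcal C$ with a disk. The periodic map $f|_{\Sigma\setminus\mathcal C}$ then extends to a periodic map of the closed surface $\Sigma_h$. Its conjugacy class is one of the $P(h)$ classes counted in Theorem~\ref{Th:P(g)}; by Nielsen's theorem it is determined by the valency data of Proposition~\ref{important prop}. This accounts for the factor $P(h)$. Once $K$ has been absorbed into the unified type, the marked quotient should be determined by this class together with the combinatorial marking of the boundary curves.

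\emph{Step 2.} Each boundary curve is either the boundary of a disk around a multiple point, in which case its valency is that of the corresponding branch point, or it lies in a free $n$-orbit of disks around simple points. On the quotient $\Sigma_h/\langle f\rangle$, the marking therefore amounts to two choices. The first is which branch points carry marked disks. The second is how many marked orbits sit at simple points, split between original boundary orbits and cut-curve orbits.

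I would bound each of these two choices by $\binom{2h+3}{h+1}$. The key input is Lemma~\ref{l}, which in genus $h$ gives $l\le 2(h-2h'+1)\le 2h+2$ branch points. So a choice of a subset of branch points, combined with a compositional count of orbit types, can be encoded as a choice of at most $h+1$ items out of $2h+3$ slots. This encoding is the step I expect to be the main obstacle. I would first try a stars-and-bars argument, bounding the number of distinct marked classes by the number of multisets of size at most $h+1$ from about $h+2$ types. One copy of $\binom{2h+3}{h+1}$ would cover the original boundary components and the other would cover the boundaries coming from cut curves.

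The delicate point is that $k$ and $r$ do not bounded. I would handle this by noting that, in the unified type, the contribution of free orbits at simple points depends only on which classes appear, not on how many times they appear. This has to be justified from Ashikaga--Ishizaka's convention for unified types.

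\emph{Step 3.} For each of the $r$ cut curves, record whether it is amphidrome or not. This gives at most $2^r$ choices, and completes the first inequality. For the second inequality, use
\[
\binom{2h+3}{h+1}\le 2^{2h+3}\cdot\tfrac12 < 2^{2h+3},
\]
which holds because the central-type coefficient is at most half of $2^{2h+3}$. Squaring gives $\binom{2h+3}{h+1}^2<2^{4h+6}$, and hence $N(h;k,r)<2^{r+4h+6}P(h)$.
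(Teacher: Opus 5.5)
Your decomposition is the same as the paper's. It has a factor $P(h)$ for the periodic map on the capped-off surface, one binomial factor each for the original boundary components and for the boundary curves coming from $\mathcal C$, a factor $2^r$ for amphidromy, and the estimate $\binom{2h+3}{h+1}\le 2^{2h+2}$ for the second inequality. Steps 1 and 3 are fine.

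The gap is Step 2. It is the only real counting step, and you leave it open. Your stars-and-bars encoding (multisets of size at most $h+1$ from about $h+2$ types) does reproduce the number $\binom{2h+3}{h+1}$ numerically. However, neither parameter comes from the data:
\begin{itemize}
\item By Lemma~\ref{l} the quotient can have up to $2h+2$ branch points. Together with the trivial valency, that gives up to $2h+3$ valency types, not $h+2$.
\item Nothing caps the number of marked boundary orbits at $h+1$, since $k$ and $r$ are arbitrary.
\end{itemize}
Your fallback claim, that a unified type records only which valency classes occur and not how often, is left unjustified. Even if you grant it, you get $2^{2h+3}$ per boundary family, since each of at most $2h+3$ classes is either present or absent. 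That yields $N(h;k,r)\le 2^{r+4h+6}P(h)$, essentially the second bound. It does not yield the first inequality with $\binom{2h+3}{h+1}^2$.

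The paper handles Step 2 directly, without any size-$(h+1)$ encoding:
\begin{itemize}
\item Each boundary curve carries either the trivial valency or one of the $l\le 2h+2$ branch-point valencies, so at most $l+1\le 2h+3$ valencies are available.
\item The valency choices for the $k$ original boundary components are bounded by $\binom{l+1}{k}\le\binom{2h+3}{k}$.
\item The central coefficient is the largest in its row, so $\binom{2h+3}{k}\le\binom{2h+3}{h+1}$ for every $k$.
\end{itemize}
That row-maximum step is what removes the dependence on the unbounded $k$ that worried you. The same argument gives the factor for the boundaries coming from cut curves.

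Be aware that $\binom{l+1}{k}$ counts choices of $k$ distinct valencies. A fully rigorous version therefore still has to explain why repeated valencies, for example several free orbits with trivial valency, do not produce further unified types. This is exactly the convention question you raised, and the paper does not address it explicitly either.
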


\begin{proof}
	A marked generalized quotient space is essentially valency  with markings for the original boundary components and the boundary curves arising from the cut curves.
	By Lemma~\ref{l}, the number $l$ of branch points of the quotient map induced by a periodic map on a surface of genus $h$ satisfies $l\leq 2h+2$. Together with the trivial valency $1$, there are at most $l+1\leq 2h+3$ possible valencies to choose from. First, for the $k$ original boundary components, the number of possible valency choices is bounded above by
	\[ \binom{l+1}{k}\leq \binom{2h+3}{k}\leq \binom{2h+3}{h+1}. \]
	Similarly, for the boundary curves arising from the cut curves, the number of possible valency choices is also at most $\binom{2h+3}{h+1}.$  Moreover, we also need to consider whether the cut curves are amphidrome. In summary, we obtain an upper bound for $N(h;k,r)$, i.e.
	\[N(h;k,r) \leq 2^r \binom{2h+3}{h+1}^{2}P(h).\]
	where $P(h)$ denotes the number of conjugacy classes of periodic maps on the closed surface of genus $h$.
\end{proof}

\subsection{Substitution of marked generalized quotients}
Let $G$  be a stable weighted graph of genus $g$ and $\sigma$ be an automorphism of $G$. Following Ashikaga and Ishizaka, we consider the quotient graph $ \widetilde{H} = G /\sigma. $  The substitution procedure replaces the local part of $\widetilde{H}$ around each vertex by a suitable marked generalized quotient. For details, we refer the reader to Ashikaga and Ishizaka (\cite[Section~3]{ashikaga2002classification}).

By the Matsumoto--Montesinos theorem (\cite{matsumoto1994pseudo}),
the classification of  pseudo-periodic maps of negative twist is equivalent to the classification of triples
$(G, \sigma, S_f)$  in our notation.
For fixed $G$ and $\sigma$, the space $S_f$ is constructed by
assigning to each vertex $v_i$ of the quotient graph
$\widetilde H=G/\sigma$
a compatible marked generalized quotient $M_{v_i}$ determined by the
corresponding values $(g_i,r_i,k_i)$.

\begin{corollary}\label{N0}
	For $g_i = 0$, let $N(0):=\max_{k_i,r_i} N(0;k_i,r_i)$. Then 
	\[ N(0) \leq 16g^2. \]
\end{corollary}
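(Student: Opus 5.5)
The plan is to plug genus-dependent bounds for $k_i$ and $r_i$ into Theorem~\ref{gi=0} and then estimate the resulting sum crudely. The vertex $v_i$ of $\widetilde H=G/\sigma$ carries $k_i$ original boundary components and $r_i$ cut curves. Both are bounded by the number of edges of $G$: every boundary curve of the periodic piece comes from an edge of $G$, or from one of the $\rho$-loops counted in the graph $G'$ from the proof of Theorem~\ref{Th:Gg}. By Lemma~\ref{Th:VandE}, and by the bound $m'\le g+n-1\le 3g-3$ for the loop-augmented graph, I would establish
\[
k_i\le 2(3g-3),\qquad r_i\le 3g-3 .
\]
Here each edge contributes at most two boundary curves, and each $\rho$-loop is one cut curve. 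In fact, for the final inequality it suffices that $\max\{k_i,2r_i\}\le 6g$ and $r_i\le 3g$.

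Next I bound the sum in Theorem~\ref{gi=0}. Set $M:=\max\{k_i,2r_i\}\le 6g$. Using $\varphi(n)\le n-1<n$ gives
\[
\sum_{n=3}^{M}\varphi(n)\Bigl(\Bigl\lfloor \tfrac{r_i}{n}\Bigr\rfloor+1\Bigr)\le \sum_{n=3}^{M}\bigl(r_i+n\bigr)\le M r_i+\tfrac{M(M+1)}{2}.
\]
A sharper route is to split the sum as $\sum_n \varphi(n)\lfloor r_i/n\rfloor\le \sum_n r_i = r_i M$ plus $\sum_{n\le M}\varphi(n)\le M^2/2$. Either way, the whole expression is $1+4(r_i+2)+4(r_iM+M^2/2)$, which is $O(g^2)$.

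The hard part is the constant $16$. Naively, $4(3g\cdot 6g+18g^2)=144g^2$, which is far too large, so the bounds on $k_i$ and $r_i$ must be tighter. For a genus-$0$ piece, stability and the genus formula~\eqref{eq:genus-graph} give $r_i\le g$, since each cut curve counted in $r_i$ is a non-separating loop at that vertex contributing to $b_1$ or $\rho$. The sharper count of the boundary components is what I expect to be the most delicate step.

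I would therefore try to prove $r_i\le g$ and $k_i\le 2g$, giving $M\le 2g$. With $\sum_{n\le M}\varphi(n)\le \tfrac{3}{\pi^2}M^2+O(M\log M)$, or simply with the estimate $\sum_{n=3}^M\varphi(n)\lfloor r/n\rfloor\le \sum_{n=3}^{M} r/n\cdot n$ refined via $\varphi(n)/n\le 1$ together with $\lfloor r/n\rfloor=0$ for $n>r$, the sum collapses to at most $r_i^2+M^2/2\le g^2+2g^2$. Then
\[
N(0)\le 1+4(g+2)+4\cdot 3g^2\le 16g^2
\]
for $g\ge 2$. The main obstacle remains justifying the bounds on $k_i$ and $r_i$ tightly enough, by a careful Euler-characteristic count for a genus-$0$ periodic piece with $k_i+2r_i$ boundary curves inside $\Sigma_g$: $k_i+2r_i-2\le -\chi\le 2g-2$. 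This gives $k_i+2r_i\le 2g$, hence $M\le 2g$ and $r_i\le g$, which is exactly what the constant requires.
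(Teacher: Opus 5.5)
Your approach is the paper's: its proof plugs $r_i\le g$, $k_i\le 2g$ and $\varphi(n)\le n$ into Theorem~\ref{gi=0}. Your Euler-characteristic argument for $k_i+2r_i\le 2g$ is correct. The genus-$0$ piece is a sphere with $k_i+2r_i$ holes, every piece of the admissible decomposition has $\chi<0$, and the $\chi$'s add up to $2-2g$. This actually supplies the justification for the bounds on $k_i,r_i$ that the paper only asserts.

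The step that fails is your final numerical inequality. The claim $1+4(g+2)+12g^2\le 16g^2$ is false at $g=2$: the left side is $65>64$. The estimate $r_i^2+M^2/2\le 3g^2$ is too lossy at that boundary case. The repair is to keep the terms you discarded. Your own first estimate, summed over the exact range $3\le n\le M$ rather than extended to $n\ge 1$, already suffices:
\[
\sum_{n=3}^{M}\varphi(n)\Bigl(\Bigl\lfloor \tfrac{r_i}{n}\Bigr\rfloor+1\Bigr)\le\sum_{n=3}^{M}(r_i+n)=(M-2)r_i+\tfrac{M(M+1)}{2}-3\le 4g^2-g-3 .
\]
Hence $N(0)\le 1+4(g+2)+4(4g^2-g-3)=16g^2-3$, which is exactly the computation behind the paper's one-line proof.

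Alternatively, in your split, bound $\sum_{n=3}^{M}\varphi(n)\lfloor r_i/n\rfloor$ by $g(g-2)$, and $\sum_{n=3}^{M}\varphi(n)$ by $\sum_{n=3}^{2g}(n-1)=g(2g-1)-1$. This gives $N(0)\le 12g^2-8g+5\le 16g^2$ for all $g\ge 2$. With either repair your argument is complete.
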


\begin{proof}
	By Theorem~\ref{gi=0}, this follows immediately from
	$r_i\leq g,k_i\leq 2g$ and $\varphi(n)\leq n.$ 
\end{proof}

Fix a pair $(G,\sigma)$, where $G$ is a stable weighted graph of genus $g$ and $\sigma$ is an automorphism of $G$. For the quotient graph $\widetilde{H}=G/\sigma$, let $S(G,\sigma)$ be the set of all possible assignments of unified marked generalized quotient types to the vertices of $\widetilde{H}$.

\begin{theorem}\label{Th:Sf}
	For any stable weighted graph $G$ of genus $g \ge 2$ and any automorphism $\sigma$ of $G$, we have
	\[
	|S(G,\sigma)|
	< 2^{16g-12} e^{\frac{10\pi}{\sqrt{3}}(3g-2)} g^{4g-4}.
	\]
\end{theorem}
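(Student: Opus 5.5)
We bound $|S(G,\sigma)|$ as a product over the vertices of the quotient graph $\widetilde H=G/\sigma$. Write $V(\widetilde H)=\{v_1,\dots,v_s\}$, and let $(g_i,k_i,r_i)$ be the data attached to $v_i$: $h_i$ is the genus of the periodic part, $k_i$ the number of original boundary components, and $r_i$ the number of cut curves. The number of choices at $v_i$ is at most $N(h_i;k_i,r_i)$, so
\[
|S(G,\sigma)|\le \prod_{i=1}^{s} N(h_i;k_i,r_i).
\]

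We split the vertices into those with $h_i=0$ and those with $h_i>0$.

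\emph{Vertices with $h_i=0$.} Corollary~\ref{N0} gives at most $16g^2$ choices for each such vertex.

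\emph{Vertices with $h_i>0$.} Theorem~\ref{gi>0} gives at most $2^{r_i+4h_i+6}P(h_i)$ choices. Theorem~\ref{Th:P(g)} then gives $P(h_i)<\frac{2}{h_i+1}e^{\frac{10\pi}{\sqrt3}(h_i+1)}\le e^{\frac{10\pi}{\sqrt3}(h_i+1)}$.

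Next we control the aggregate quantities. The key constraints come from Lemma~\ref{Th:VandE} and the genus formula~\eqref{eq:genus-graph}:
\begin{itemize}
\item $s\le |V(G)|\le 2g-2$;
\item $\sum_i h_i\le \sum_v g(v)\le g$, since a vertex of $\widetilde H$ is the quotient of a $\sigma$-orbit, and the periodic part assigned to it has genus at most that of one representative;
\item $\sum_i r_i\le \sum_v\rho(v)\le g$;
\item $\sum_i (h_i+1)\le g+(2g-2)=3g-2$.
\end{itemize}
With these, the exponential factors multiply to at most $e^{\frac{10\pi}{\sqrt3}(3g-2)}$. The powers of two multiply to at most $2^{\sum r_i+4\sum h_i+6s'}$, where $s'$ is the number of positive-genus vertices. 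Since $s'\le \sum h_i\le g$, this exponent is at most $g+4g+6g$.

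The zero-genus vertices contribute $(16g^2)^{s_0}$, where $s_0$ is their number. Using only $s_0\le 2g-2$ gives $2^{8g-8}g^{4g-4}$. Combining all factors gives a power of two of order at most roughly $2^{19g}$, together with the claimed $g^{4g-4}$ and $e^{\frac{10\pi}{\sqrt3}(3g-2)}$ factors.

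\textbf{Main obstacle.} The naive tally of powers of two slightly overshoots the stated $2^{16g-12}$, so the count must be sharpened. I would first use $s_0+s'=s\le 2g-2$ jointly rather than bounding $s_0$ and $s'$ separately. The remaining $\binom{2h+3}{h+1}^2$-type constants would then be absorbed using $\sum h_i\le g$ together with the crude estimate $2^{6}\le 16g^2$ for $g\ge2$. That estimate lets each positive-genus vertex's constant $2^6$ be traded against the $16g^2$ slot a zero-genus vertex would have occupied. The result is a bound of the form $2^{4(2g-2)}g^{2(2g-2)}\cdot 2^{\sum r_i+4\sum h_i}\cdot e^{\frac{10\pi}{\sqrt3}(3g-2)}\le 2^{16g-12}g^{4g-4}e^{\frac{10\pi}{\sqrt3}(3g-2)}$.

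The second delicate point is justifying $\sum_i h_i\le g$ and $\sum_i r_i\le g$ on the quotient graph rather than on $G$. This requires the observation that the periodic part over an orbit has genus bounded by that of a single vertex, and that the cut curves at $v_i$ are among the self-loops $\rho$ of the vertices in its orbit (or edges folded by $\sigma$). I would check this against the substitution procedure of Ashikaga--Ishizaka. Strict inequality then follows because $P(h)$ is bounded strictly.
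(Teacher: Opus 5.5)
Your proposal is correct and follows essentially the paper's route. You bound each vertex of $G/\sigma$ by Corollary~\ref{N0}, or by Theorem~\ref{gi>0} together with Theorem~\ref{Th:P(g)}, and then control the product using $\sum(h_i+r_i)\le g$ and $V_0+V_+\le 2g-2$. The paper stops at $(16g^2)^{V_0}2^{4g+6V_+}e^{\frac{10\pi}{\sqrt3}(g+V_+)}$ and ``simplifies''; your trade $2^6\le 16g^2$ is one explicit way to do that step, and it yields $2^{13g-8}g^{4g-4}e^{\frac{10\pi}{\sqrt3}(3g-2)}$, which is indeed below the stated bound for $g\ge2$.
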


\begin{proof}
	 Suppose that the quotient graph $\widetilde H$ has vertices $v_1,\ldots,v_V$. For each $i=1,\ldots,V$, let $(g_i,k_i,r_i)$ be the data associated with $v_i$. Let $V_0$ and $V_+$ denote the numbers of vertices of genus $0$ and of positive genus, respectively.
	
	By Corollary~\ref{N0},  for a genus $0$ vertex, $N(0)\leq 16g^2$; by Theorem~\ref{gi>0}, for a vertex of positive genus,  $ N(g_i;k_i,r_i) \leq 2^{r_i} \binom{2g_i+3}{g_i+1}^{2} P(g_i). $ It follows that
	\begin{align*}
		|S(G,\sigma)|& \leq (16g^2)^{V_0} \prod_{\substack{1\leq i\leq V\\ g_i>0}} \left[ 2^{r_i} \binom{2g_i+3}{g_i+1}^{2} P(g_i) \right]\\
		&<(16g^2)^{V_0}  \prod_{\substack{1\leq i\leq V\\ g_i>0}} \left[2^{r_i+4g_i+6}P(g_i) \right],
	\end{align*}
	where $P(g_i) < \frac{2}{g_i+1}\, \mathrm{e}^{\frac{10\pi}{\sqrt{3}}(g_i+1)}$ (by Theorem~\ref{Th:P(g)}).
	
	 Recall from equation~\eqref{eq:genus-graph} that $g = \sum_i (g_i + r_i) + b_1(G)$, which implies that
	\[ \sum_{g_i>0} (g_i + r_i) \leq g \quad \text{and} \quad \sum_{g_i>0} g_i \leq g. \]
	Therefore, we obtain
	\[ |S(G,\sigma)| < (16g^2)^{V_0} \, 2^{4g+6V_+} \, e^{\frac{10\pi}{\sqrt{3}}(g+V_+)}. \]
	Furthermore, by Lemma~\ref{Th:VandE}, we have $V_0+V_+ = V \leq 2g-2$. After simplification, we finally obtain
	\begin{align*}
		|S(G,\sigma)| < 2^{16g-12} e^{\frac{10\pi}{\sqrt{3}}(3g-2)} g^{4g-4}.
	\end{align*}
\end{proof}
	
\section{An upper bound on the number of singular fibers}
Let $\phi: S \to \Delta$ be a proper surjective holomorphic map from a complex surface $S$ to the unit disk $\Delta = \{ t \in \mathbb{C} \mid |t| < 1 \}$, such that the fiber $\phi^{-1}(t)$ is a smooth Riemann surface of genus $g \geq 2$ for any $t \in \Delta^* = \Delta \setminus \{0\}$. The map $\phi$ is called a degeneration of genus $g$, and $ F = \phi^{-1}(0) $ is called the \emph{singular fiber} (\cite{ashikaga2002classification}). 
Suppose that $\phi$ is \emph{normally minimal}, i.e., 
the reduced scheme of $F$ has only normal crossing and every $(-1)$-curve in $F$ intersects the other components at least three points.

Let $\phi_1: S_1 \to \Delta$ and $\phi_2: S_2 \to \Delta$ be degenerations of genus $g$.
We say that $\phi_1$ and $\phi_2$ are \emph{topologically equivalent} if
there is an orientation-preserving homeomorphism
$\psi : S_1 \to S_2$ such that  $\phi_2 \circ \psi = \phi_1$. 
We denote by
\[
\mathcal D_g
=
\{[\phi] \mid \phi \text{ is a degeneration of genus } g\},
\]
where $[\phi]$ denotes the topological equivalence class of $\phi$. 

Next, we will use Matsumoto-Montesinos' theorems to classify topological equivalence classes of  degenerations.

\begin{theorem}\upshape(\cite{matsumoto2011pseudo})\label{MM1}
	The conjugacy class, realized as the topological monodromy of a degeneration of curves, of the mapping class group of a Riemann surface of genus $g \geq 2 $ is represented by a pseudo-periodic map of negative twist. Conversely, any conjugacy class of pseudo-periodic map of negative twist is realized as the topological monodromy of certain degeneration of curves.
\end{theorem}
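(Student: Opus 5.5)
This theorem is quoted from Matsumoto--Montesinos, so the plan below reconstructs their argument in outline; it is not a new proof. The forward direction (monodromy $\Rightarrow$ pseudo-periodic of negative twist) and the converse (realization) will be treated separately.

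\emph{Forward direction.} Let $\phi:S\to\Delta$ be a degeneration of genus $g\ge 2$.
\begin{enumerate}[(1)]
\item By blowing up, assume $\phi$ is normally minimal. Then the reduced fiber $F_{\mathrm{red}}$ has only normal crossings, and $\phi$ is locally $t=x^a$ near a smooth point of a component of multiplicity $a$, and $t=x^ay^b$ near a node.
\item Take a tubular neighbourhood $N$ of $F$ with $\partial N=\phi^{-1}(\partial\Delta_\epsilon)$. Decompose $N$ into pieces: neighbourhoods of the components away from the nodes, and the solid tori $\{|x|,|y|\le\epsilon\}$ around the nodes.
\item Over a component $\Theta$ of multiplicity $m$, the nearby fiber restricts to an $m$-fold (possibly disconnected) cyclic covering of $\Theta$ minus small discs. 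Lifting the loop around $\partial\Delta$ horizontally via a connection adapted to the product structure gives the deck transformation. This is a periodic map on the corresponding subsurface.
\item Over a node with local equation $x^ay^b=t$, the nearby fiber $\{x^ay^b=\epsilon e^{i\theta}\}$ is a disjoint union of $\gcd(a,b)$ annuli. Writing $x=r_1e^{i\alpha}$, $y=r_2e^{i\beta}$ with $a\alpha+b\beta=\theta$, the monodromy is computed explicitly and is a fractional Dehn twist.
\item Read off its screw number from the explicit formula. It should be $-1/(ab)$ per annulus, up to the $\gcd$ normalisation; the negative sign is forced by the orientation induced from the complex structure.
\item Chains of rational components joining two "essential" components are absorbed into a single annulus. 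Their screw numbers add, staying negative; this sum is exactly the term $-s(C_i)$ appearing in the integer $K$ of Section~5.
\item Gluing the pieces, the monodromy is isotopic to a map that is periodic off the core curves $\mathcal C$ of these annuli and twists negatively along each core. This is a pseudo-periodic map of negative twist.
\item Finally, discard inessential curves so that $\mathcal C$ becomes an admissible system; since $g\ge 2$, the remaining pieces have negative Euler characteristic.
\end{enumerate}

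\emph{Converse.} Let $f$ be pseudo-periodic of negative twist.
\begin{enumerate}[(1)]
\item Build the generalized quotient $S_f$ combinatorially.
\item For each periodic piece, take the quotient orbifold. The mapping torus of the periodic piece is then the boundary of a standard complex model: the cyclic quotient of (surface)$\times$(disc) by $(p,z)\mapsto(f(p),\zeta z)$, resolved at its isolated cyclic quotient singularities. This gives components with multiplicities.
\item For each annulus with screw number $s<0$ and boundary valencies $(m^{(j)},\lambda^{(j)},\sigma^{(j)})$, realise the twist by a linear chain of rational curves. The multiplicities come from a continued-fraction expansion, as in Hirzebruch--Jung resolution; the length of the chain is governed by $K\ge -1$.
\item Plumb these local complex models along their boundaries. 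The gluing is compatible because the valencies at the two ends of each annulus match by construction.
\item The result is a complex surface $S$ with a proper holomorphic map to a disc whose monodromy is conjugate to $f$. Everything is local over $\Delta$, so no algebraization is needed.
\end{enumerate}

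\emph{Main obstacle.} I expect the hardest step to be the precise bookkeeping of screw numbers and valencies. In the forward direction this means showing that the Dehn twist exponent along a chain is exactly $-s$ and has the correct sign. In the converse it means showing that every negative rational screw number, with arbitrary compatible boundary valencies, is realised by some chain; this includes the amphidrome case, which requires a $\mathbb Z/2$ quotient of the chain.

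I would first attempt this via the continued-fraction identity relating a chain $x^{a_0}y^{a_1}$, $x^{a_1}y^{a_2},\dots$ to the fraction $\delta^{(1)}/\lambda^{(1)}+\delta^{(2)}/\lambda^{(2)}+K$. That identity is what makes $K$ an integer with $K\ge -1$.
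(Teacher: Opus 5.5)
The paper gives no argument for this statement (it is imported as a black box from Matsumoto--Montesinos), and your outline is consistent with the standard argument behind that citation: the monodromy is read off locally from a normal-crossing model, and the converse realizes the generalized quotient by cyclic-quotient models joined by Hirzebruch--Jung chains. If you write it out, fix three details: each annulus over a node $x^ay^b=t$ has screw number exactly $-\gcd(a,b)^2/(ab)$; summing along a connecting chain gives $s(C_i)<0$ itself, and it is $-s(C_i)$ that enters $K$; and the amphidrome cut curves come from chains ending at a component of multiplicity $2m$ carrying two $(-2)$-curves of multiplicity $m$, where step~(8) must merge two parallel (not inessential) curves.
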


\begin{theorem}\upshape(\cite{matsumoto2011pseudo})\label{MM2}
	Let $\Sigma_g$ be a closed oriented surface of genus $g\ge1$, and let
	$f:\Sigma_g\to\Sigma_g$ be a pseudo-periodic map of negative twist.
	Then the conjugacy class of $f$ is completely determined by the following data:An admissible system of cut curves $\mathcal C=\bigsqcup C_i$ on $\Sigma_g$,the action of $f$ on the oriented graph $G_{\mathcal C}$ induced by $\mathcal C$, the screw numbers of $f$ around the annuli of each $C_i$ and the valency data of the periodic maps which stabilize the connnected components of $\Sigma_g-\mathcal{C}$. It is equivalent to determining the action of $f$ on $G_{\mathcal{C}}$ and Matsumoto-Montesinos' generalized quotient $\Sigma_g \to S_f$ of $f$.	
\end{theorem}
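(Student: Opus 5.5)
The plan is to reduce the conjugacy problem to a piecewise one along a canonical cut system, and then reassemble. Let $f_1,f_2:\Sigma_g\to\Sigma_g$ be pseudo-periodic maps of negative twist with the same data. First I would fix for each $f_j$ a \emph{precise} admissible system of cut curves $\mathcal C_j$. This means a minimal one, with no redundant curves whose removal keeps the complement periodic, in the spirit of the canonical reduction system of Birman--Lubotzky--McCarthy. I would show it is unique up to isotopy within the conjugacy class, so that $\mathcal C$, the graph $G_{\mathcal C}$ and the induced oriented action are genuine conjugacy invariants. The converse direction says that this data (graph action, screw numbers $s(C_i)$, valencies) determines $f$ up to conjugacy. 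Necessity of the data is clear, since all these quantities are visibly invariant under conjugation. So the work is in sufficiency.

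For sufficiency, I would first choose a homeomorphism $h_0$ carrying $\mathcal C_1$ to $\mathcal C_2$ that realizes the given isomorphism of $G_{\mathcal C}$ intertwining the two graph actions. I would then treat each $f$-orbit of components of $\Sigma_g-\mathcal A$, where $\mathcal A$ is a union of annular neighborhoods of the cut curves. On one representative $\mathcal B_v$ of such an orbit, the first return map $f^{a_v}$ is periodic. By Nielsen's theorem in its version for surfaces with boundary (the total valency together with the boundary valencies, as in Proposition~\ref{important prop}), the two return maps are conjugate by a homeomorphism of the pieces respecting boundary labels. I would transport this conjugacy along the orbit by setting $h=f_2^k\circ h_v\circ f_1^{-k}$ on $f_1^k(\mathcal B_v)$, which makes $h$ equivariant on $\tilde{\mathcal B}$.

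On each annulus $\mathcal A_i$, the restriction of the first return map is determined up to conjugacy rel boundary by three things: the boundary rotations, which are fixed by the valencies $(m,\lambda,\sigma)$; the screw number $s(C_i)$, which fixes the fractional Dehn twist amount; and whether $C_i$ is amphidrome. In the amphidrome case one passes to $f^{\alpha_i/2}$, which reverses the core, and uses half the screw number. This is exactly where the integer $K$ defined above enters, as the full-twist part left after subtracting the boundary rotation contributions $\delta/\lambda$.

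The main obstacle will be the gluing step. The conjugacies chosen on the periodic pieces restrict to boundary circles as rotations that need not match those coming from the annuli. I would try to absorb the mismatch by precomposing on each annulus with a map commuting with the annular return map: a rotation by a multiple of $2\pi/\lambda$, or a power of the twist. I would then check that this costs nothing, because the screw number is a full invariant of the annular map rel boundary once the boundary rotations are fixed. Amphidrome curves need separate bookkeeping, since the two boundary circles are swapped.

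Finally, the stated equivalence with the generalized quotient $\Sigma_g\to S_f$ should follow by reading off the same data. The components of $S_f$ coming from $\tilde{\mathcal B}$ are the quotient orbifolds of the periodic pieces, and their branch data are the valencies. The annuli contribute chains of rational curves whose length and multiplicities are determined by $K$ together with $\lambda^{(j)}$ and $\delta^{(j)}$. Hence $s(C_i)$ is recovered from $S_f$ via the formula for $K$, and conversely.
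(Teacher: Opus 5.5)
The paper gives no proof of this statement. It is quoted from Matsumoto--Montesinos, who rework Nielsen's classification of algebraically finite mapping classes. Your outline (canonical cut system, Nielsen on the periodic pieces, linear twists on the annuli, gluing) is the standard strategy behind that result, and your first step is fine by Birman--Lubotzky--McCarthy: for a map of negative twist every $s(C_i)\neq 0$, so the precise system is the canonical reduction system.

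The gap is the sentence in which Nielsen's theorem supplies a conjugacy of the first-return maps ``respecting boundary labels''. Nielsen's theorem, even in its bordered form, only gives \emph{some} conjugacy $h_v$ between two periodic maps with equal valency data. That $h_v$ need not send each boundary curve of $\mathcal B_v$ to the curve prescribed by $h_0$:
\begin{itemize}
\item it may exchange two $f^{a_v}$-orbits of boundary curves of the same valency;
\item even if it preserves every orbit, it may shift different orbits by different powers of $f_2^{a_v}$, whereas composing with a power of $f_2^{a_v}$ shifts all orbits at once.
\end{itemize}
Already for the order-$2$ rotation of a four-holed sphere with $\partial_1\leftrightarrow\partial_2$ and $\partial_1'\leftrightarrow\partial_2'$, neither the identity nor the rotation realizes $\partial_1\mapsto\partial_1$, $\partial_1'\mapsto\partial_2'$. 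Each cut curve is shared by two pieces, so the conjugacies on its two sides must induce the same identification of it, which is exactly what the graph isomorphism prescribes. This matching is therefore the real content of sufficiency.

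The obstacle you single out is, by contrast, harmless. After an equivariant adjustment in collars of the periodic part, $h$ is a rotation on $\partial\tilde{\mathcal B}$; this works because the centralizer of a finite-order rotation in $\mathrm{Homeo}^+(S^1)$ is connected. A fibrewise rotation $(t,\theta)\mapsto(t,\theta+a(t))$ commutes with the linear twist. So any such boundary values extend across a non-amphidrome annulus, and across an amphidrome one you only need $a(t)+a(-t)$ to be a constant integer. But nothing done inside the annuli can change \emph{which} circle goes where.

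The missing ingredient is a lemma: the centralizer of a periodic map $F$ on a bordered surface $B$ acts transitively on the $F$-equivariant, valency-preserving labelings of the components of $\partial B$. One proves it on the quotient orbifold $S'=B/\langle F\rangle$ with monodromy $\phi\colon\pi_1^{\mathrm{orb}}(S')\to\mathbb Z/n$:
\begin{itemize}
\item a half-twist exchanges two boundary components with equal $\phi$-values;
\item the lift of the push of a boundary component $\bar c$ around a loop $\gamma$, normalized to be the identity over the complement of its support, shifts the lifts of $\bar c$ by $\phi(\gamma)$ modulo $\langle\phi(\bar c)\rangle$ and fixes the other boundary curves. In the example above, pushing one boundary circle of the quotient around a cone point does the job.
\end{itemize}
You must check two things. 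First, that these maps preserve $\phi$: this is automatic when $S'$ has genus $0$, but in positive genus with $\phi(\bar c)\neq0$ a push alters $\phi$ on handle curves and has to be compensated. Second, using surjectivity of $\phi$, that the achievable shifts exhaust $(\mathbb Z/n)/\langle\phi(\bar c)\rangle$. With this lemma your gluing goes through; without it sufficiency is not established.
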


By Theorem~\ref{MM1}, the topological equivalence classes of degenerations of genus $g$ are in one-to-one correspondence with the conjugacy classes of pseudo-periodic maps of negative twist on $\Sigma_g$. Hence, the classification of degenerations is equivalent to the classification of pseudo-periodic maps of negative twist.

By Theorem~\ref{MM2}, the conjugacy classes of
pseudo-periodic maps of negative twist are classified by triples
$(G,\sigma,S_f),$
where $G$ is a stable weighted graph of genus $g$, $\sigma$ is an
automorphism of $G$, and $S_f$ is the minimal generalized quotient of $f$.
For fixed $G$ and $\sigma$, the minimal generalized quotient $S_f$ is
obtained by substituting compatible marked generalized quotients into
the vertices of the quotient graph $G/\sigma$.
As discussed in the second paragraph of Section~\ref{sec:quo}, we classify the marked generalized quotient spaces, which may vary with the integer parameter $K$ determined by the screw number and the boundary valencies, into finitely many unified marked generalized quotient types. Consequently, we obtain an upper bound $|\mathcal{G}_{g}| \cdot |\text{Aut}(G)| \cdot |S(G,\sigma)|$ on the number of classes of pseudo-periodic maps of negative twist of genus $g$. By Theorem~\ref{MM1}, this is equivalent to the classification of degenerations of curves of genus $g$. In this way, we obtain finitely many unified types of degenerations.

\begin{theorem}
    For any $g\geq 2$, let $|\overline{\mathcal D}_g|$ denote the number of unified types of degenerations of genus $g$. Then 
	\begin{align*}
		|\overline{\mathcal{D}}_g| &< 2^{22g-13}  \cdot e^{\frac{10\pi}{\sqrt{3}}(3g-2)} \cdot (g+1)^{10g-7}\\
		&<(1061g + 1061)^{10g}.
	\end{align*}
	In particular,
	\[
	|\overline{\mathcal D}_g|
	\leq g^{O(g)}
	 \text{ as } g\to\infty.
	\]
\end{theorem}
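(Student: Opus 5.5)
The plan is to bound $|\overline{\mathcal D}_g|$ by the product $|\mathcal G_g|\cdot\max_G|\mathrm{Aut}(G)|\cdot\max_{G,\sigma}|S(G,\sigma)|$. This reduction is justified by the discussion preceding the statement, which rests on Theorem~\ref{MM1} and Theorem~\ref{MM2}. Every unified type of degeneration arises from some triple $(G,\sigma,S_f)$. Here $G\in\mathcal G_g$ and $\sigma\in\mathrm{Aut}(G)$, and the unified type of $S_f$ is an element of $S(G,\sigma)$, the set of assignments of unified marked generalized quotient types to the vertices of $G/\sigma$. Since the map from triples to unified types is surjective, the number of unified types is at most the number of triples, which is
\[
|\overline{\mathcal D}_g|\le \sum_{G\in\mathcal G_g}\sum_{\sigma\in\mathrm{Aut}(G)}|S(G,\sigma)|\le |\mathcal G_g|\cdot \max_{G}|\mathrm{Aut}(G)|\cdot\max_{G,\sigma}|S(G,\sigma)|.
\]

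Next I will insert the three explicit bounds already proved. Theorem~\ref{Th:Gg} gives $|\mathcal G_g|<\frac{81}{4e^3(g+1)}\bigl(\tfrac{2e}{3}(g+1)\bigr)^{3g}$. Theorem~\ref{Th:AutG} gives $|\mathrm{Aut}(G)|<(2/e)^{3g-3}(3g-2)^{3g-2}$. Theorem~\ref{Th:Sf} gives $|S(G,\sigma)|<2^{16g-12}e^{\frac{10\pi}{\sqrt3}(3g-2)}g^{4g-4}$.

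When multiplying, the factors of $e$ from the first two bounds nearly cancel: $e^{3g}\cdot e^{-3g+3}\cdot e^{-3}=1$. For the remaining factors I will use the crude estimates $3g-2\le 3(g+1)$ and $g\le g+1$, and collect all constants. The powers of $(g+1)$ then total $(3g-1)+(3g-2)+(4g-4)=10g-7$. The powers of $2$ are $2^{3g}\cdot2^{3g-3}\cdot2^{16g-12}$, times the leftover constants $\frac{81}{4}\cdot 3^{-3g}\cdot 3^{3g-2}$, which is absorbed into $2^{22g-13}$. This bookkeeping yields the first displayed inequality. I expect the only delicate part to be checking that the constants land exactly on $2^{22g-13}$ and $(g+1)^{10g-7}$; if they are off by a bounded factor, I will loosen the exponent of $2$ slightly, since only the asymptotic form is really needed.

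For the second inequality, I will bound $2^{22g-13}<2^{22g}$, $(g+1)^{10g-7}<(g+1)^{10g}$, and $e^{\frac{10\pi}{\sqrt3}(3g-2)}<e^{\frac{30\pi}{\sqrt3}g}$. Taking $10g$-th roots, this amounts to checking the numerical inequality $2^{2.2}e^{\sqrt3\pi}\le 1061$. Indeed $2^{2.2}\approx4.59$ and $e^{\sqrt3\pi}\approx 230.8$, so the product is about $1060$, which gives $(1061(g+1))^{10g}$. Finally, $(1061(g+1))^{10g}=\exp\bigl(10g\log(1061(g+1))\bigr)=g^{O(g)}$, because $\log(1061(g+1))=O(\log g)$ for $g\ge2$. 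This gives the asymptotic statement.
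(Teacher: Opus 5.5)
Your proposal is correct and follows the paper's proof: you bound $|\overline{\mathcal D}_g|$ by the product of the bounds from Theorems~\ref{Th:Gg}, \ref{Th:AutG} and \ref{Th:Sf}, cancel the $e$-factors, and absorb the leftover constant after using $3g-2<3(g+1)$ and $g<g+1$. Your bookkeeping checks out, since $2^{22g-15}\cdot\frac{9}{4}=9\cdot 2^{22g-17}<2^{22g-13}$, and the final check $2^{2.2}e^{\sqrt{3}\pi}\approx 1060.3<1061$ holds; with so little margin, though, you should use upper estimates rather than rounded values, or keep the discarded factor $2^{-13}e^{-20\pi/\sqrt{3}}(g+1)^{-7}$ as slack.
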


\begin{proof}
	By Theorem~\ref{Th:Gg}, \ref{Th:AutG} and~\ref{Th:Sf}, we have
	\begin{align*}
		|\overline{\mathcal D}_g|&<|\mathcal{G}_{g}|\cdot|\text{Aut}(G)|\cdot|S(G,\sigma)| \\ 
		& <\frac{81}{4e^{3}(g+1)}(\frac{2e}{3}(g+1))^{3g}\cdot ( \frac{2}{e} )^{3g-3} (3g-2)^{3g-2}\cdot2^{16g-12} e^{\frac{10\pi}{\sqrt{3}}(3g-2)} g^{4g-4} \\ 
		& =2^{22g-17} \cdot 3^{4-3g} \cdot e^{\frac{10\pi}{\sqrt{3}}(3g-2)} \cdot g^{4g-4} \cdot (g+1)^{3g-1} \cdot (3g-2)^{3g-2}\\
		& < 2^{22g-13}  \cdot e^{\frac{10\pi}{\sqrt{3}}(3g-2)} \cdot (g+1)^{10g-7}\\
		& < (1061g + 1061)^{10g}.
	\end{align*}    
\end{proof}

By the construction of Matsumoto-Montesinos (\cite{matsumoto2011pseudo}), the minimal quotient $S_f$ of a pseudo-periodic map $f$ of negative twist is homeomorphic to a normally minimal singular fiber of a degeneration of Riemann surfaces of genus $g$.
We denote by
\[
\mathcal F_g
=
\{[F] \mid F \text{ is a singular fiber  of genus } g\},
\]
where $[F]$ denotes the homeomorphism class of $F$.

The topological equivalence class of the degeneration is determined by the triple $(G, \sigma, S_f)$, while the homeomorphism class of the singular fiber depends only on the minimal generalized quotient space $S_f$. By grouping the generalized quotient spaces that differ only in the value of the integer parameter $K$, we correspondingly obtain the unified types of  singular fibers. 
Consequently, we have the following corollary.

\begin{corollary}
	For any $g\geq 2$, let $|\overline{\mathcal F}_g|$ denote the number of unified types of singular fibers of genus $g$. Then
	\[
	|\overline{\mathcal F}_g|\leq |\overline{\mathcal D}_g|
	\leq  g^{O(g)}
	\text{ as } g\to\infty.
	\]
\end{corollary}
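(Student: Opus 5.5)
The plan is to deduce the corollary from the preceding theorem on $|\overline{\mathcal D}_g|$ by exhibiting a surjection from unified types of degenerations onto unified types of singular fibers. The second inequality $|\overline{\mathcal D}_g|\le g^{O(g)}$ is then exactly the previous theorem, since $(1061g+1061)^{10g}=g^{O(g)}$. So everything reduces to proving $|\overline{\mathcal F}_g|\le|\overline{\mathcal D}_g|$.

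\textbf{Step 1: from degenerations to fibers.}
Take a unified type of singular fiber and choose a representative normally minimal singular fiber $F$ of genus $g$. By definition $F=\phi^{-1}(0)$ for some degeneration $\phi$ of genus $g$. By Theorem~\ref{MM1}, the topological monodromy of $\phi$ is a pseudo-periodic map $f$ of negative twist. By Theorem~\ref{MM2}, $[\phi]$ is encoded by a triple $(G,\sigma,S_f)$. By the Matsumoto--Montesinos construction, the minimal generalized quotient $S_f$ is homeomorphic to $F$. Hence the map
\[
[\phi]\longmapsto [\phi^{-1}(0)]\cong [S_f]
\]
from $\mathcal D_g$ to $\mathcal F_g$ is surjective.

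\textbf{Step 2: compatibility with the unified grouping.}
Passing to unified types on both sides means identifying those $S_f$ that differ only in the integer parameters $K$ attached to the cut curves. Two triples $(G,\sigma,S_f)$ and $(G,\sigma,S'_f)$ in the same unified degeneration type differ only in these $K$ values. Their quotients $S_f$ and $S'_f$ therefore lie in the same unified fiber type. So the map of Step 1 descends to a well-defined map $\overline{\mathcal D}_g\to\overline{\mathcal F}_g$, and it remains surjective.

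\textbf{Step 3: counting.}
A surjection between finite sets gives $|\overline{\mathcal F}_g|\le|\overline{\mathcal D}_g|$. Combining this with the bound $|\overline{\mathcal D}_g|<(1061g+1061)^{10g}$ and noting that $10g\log(1061(g+1))=O(g\log g)$ yields $|\overline{\mathcal F}_g|\le g^{O(g)}$.

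The main obstacle is Step 2. One must check that the unified grouping of fibers is exactly the image of the unified grouping of quotients. In particular, the fiber-side grouping should not separate two spaces that the degeneration side identifies. To settle this, I would use the fact that the homeomorphism type of $S_f$ depends on $K$ only through the lengths of the chains of rational curves inserted over each annulus. Grouping by $K$ on the quotient side therefore exactly matches grouping fibers by inner-chain depth, so the induced map is well defined.
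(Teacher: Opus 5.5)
Your proposal is correct and follows essentially the paper's argument. The paper likewise notes that a degeneration class is encoded by the triple $(G,\sigma,S_f)$ while the normally minimal fiber is homeomorphic to $S_f$ alone, and that unified fiber types come from the same grouping by $K$ used for unified degeneration types; this gives $|\overline{\mathcal F}_g|\le|\overline{\mathcal D}_g|$, and the preceding theorem then supplies the $g^{O(g)}$ bound. You make the surjection $\overline{\mathcal D}_g\to\overline{\mathcal F}_g$ and its well-definedness explicit, whereas in the paper well-definedness holds directly because unified fiber types are defined by that same $K$-grouping.
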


\subsection*{Acknowledgements}
This work is supported by NSFC (No. 12271073).

 \clearpage
\bibliographystyle{amsalpha}
\bibliography{SF}

School of Data Science and Artificial Intelligence, Jilin Engineering Normal University, Changchun, Jilin Province, P. R. of China. 

{\it E-mail address}: \texttt{qiyangkong@163.com}

\vspace{1em}
School of Mathematical Sciences, Dalian University of Technology, Dalian, Liaoning Province, P. R. of China. 

{\it E-mail address}: \texttt{xlliu1124@dlut.edu.cn}

\vspace{1em}
School of Mathematical Sciences, Dalian University of Technology, Dalian, Liaoning Province, P. R. of China.

{\it E-mail address}: \texttt{wangjy0622@mail.dlut.edu.cn}

\end{document}